\documentclass{amsart}
\usepackage{lmodern}
\usepackage{microtype} 
\usepackage{amsthm,amsfonts,amssymb,amsmath,thmtools,interval}
\usepackage{bm}%\boldsymbol
\usepackage{calligra}%mathcal
\usepackage{mathrsfs}%mathscr
\usepackage{mathtools}
\usepackage{stmaryrd}%mapsfrom
\usepackage{tikz}%picture
\usepackage{tikz-3dplot}
\usepackage{tikz-cd}%commutative-diagram
\usepackage[all]{xy}

\usepackage{color}
\definecolor{dkgreen}{rgb}{0,0.4,0}
\definecolor{mblue}{rgb}{0,0.5,0.62}
\definecolor{gray}{rgb}{0.5,0.5,0.5}
\definecolor{burgendy}{rgb}{0.502, 0, 0.125}

\usepackage{hyperref}%ref
\hypersetup{
    linktoc=page,
        colorlinks = true,
        linkcolor = mblue, %Colour of internal links
        filecolor = cyan,
        urlcolor = dkgreen,
        citecolor = burgendy, %Colour of citations
}

\usepackage[capitalize]{cleveref}

\theoremstyle{plain}
\newtheorem{thm}{Theorem}[subsection]
\newtheorem{cor}[thm]{Corollary}
\newtheorem{lem}[thm]{Lemma}
\newtheorem{prop}[thm]{Proposition}
\newtheorem{conj}[thm]{Conjecture}

\theoremstyle{definition}
\newtheorem{defn}[thm]{Definition}
\newtheorem{rmk}[thm]{Remark}
\newtheorem{exe}[thm]{Example}

\makeatletter
    \let\c@equation\c@thm
\makeatother
\numberwithin{equation}{subsection}

\newcommand{\bb}[1]{\mathbb{#1}}
\newcommand{\rr}[1]{\mathrm{#1}}
\newcommand{\cc}[1]{\mathcal{#1}}
\newcommand{\scr}[1]{\mathscr{#1}}
\newcommand{\fk}[1]{\mathfrak{#1}}

\newcommand{\cros}{^{\times}}
\newcommand{\dR}{\mathrm{dR}}

\newcommand{\gr}{\mathrm{gr}}

\newcommand{\im}{\mathrm{im}}
\newcommand{\inv}{^{-1}}
\newcommand{\pr}{\mathrm{pr}}

\newcommand{\Hodge}{^{\mathrm{H}}}

\let\emptyset\varnothing

\title{Classical and irregular Hodge numbers}

\author{Yichen Qin } 
\address{School of Mathematical Sciences, Fudan University, 
     Handan Road 220, 200433 Shanghai, China}
\email{yichenqin@fudan.edu.cn}

\author{Dingxin Zhang} 
\address{Center for Mathematics and Interdisciplinary Sciences, Fudan University, and Shanghai Institute for Mathematics and Interdisciplinary Sciences,  200433 Shanghai, China}
\email{zhang@simis.cn}

\begin{document}

\begin{abstract}
Let $U$ be a smooth quasi-projective complex variety with a regular function $f$. The twisted de Rham cohomology groups $\rr{H}^k_{\mathrm{dR}}(U, f)$ carry the decreasing irregular Hodge filtration, whose graded pieces have dimensions known as the irregular Hodge numbers. In this paper, we prove that the irregular Hodge numbers admit an explicit characterization in terms of classical Hodge numbers, closely related to Hodge-theoretic numbers constructed by Katzarkov, Kontsevich, and Pantev for Landau--Ginzburg models. As direct applications, we show that irregular Hodge numbers of non-degenerate functions are independent of the choice of non-degenerate functions,  and we give a concrete formula for irregular Hodge numbers for unipotent non-degenerate functions.
\end{abstract}

\maketitle

%\tableofcontents

\section{Introduction}

Let \(U\) be a smooth quasi-projective variety of pure dimension \(n\)
over the complex numbers \(\mathbb{C}\), and let
\(f\colon U \to \mathbb{A}^1\) be a regular function. We can   form
the \emph{twisted de~Rham complex}
\[
    \Omega^0_{U/\mathbb{C}} \xrightarrow{\mathrm{d} + \mathrm{d} f}
    \Omega^1_{U/\mathbb{C}} \xrightarrow{\mathrm{d}+\mathrm{d}f}
    \cdots \xrightarrow{\mathrm{d}+\mathrm{d}f} \Omega^{n}_{U/\mathbb{C}},
\]
whose hypercohomology  is called the \emph{twisted de~Rham cohomology},
denoted by
\[
    \mathrm{H}^{\ast}_{\rr{dR}}(U,f) \coloneq \mathbb{H}^{\ast}(U; (\Omega_{U/\mathbb{C}}^{\bullet}, \mathrm{d}+\mathrm{d}f)).
\]
There is also a compactly supported variant. In particular, when $f$ is a constant function, we recover the usual algebraic de Rham cohomologies of $U$. 

Guided by analogies with exponential sums over finite
fields and the weight theory for $\ell$-adic cohomologies, Deligne envisioned \cite{DeligneTheorieHodge07} that there should exist an
\emph{irregular Hodge theory} for these twisted de Rham cohomologies. 
Due to the irregular singularity at infinity of the connection $(\mathscr{O}_U,\mathrm{d}+\mathrm{d}f)$, the twisted de Rham cohomologies \(\mathrm{H}^{\ast}_{\rr{dR}}(U,f)\) cannot be directly studied through the classical Hodge theory or Saito's mixed Hodge modules.

Deligne's vision was realized in a series of works by  Esnault, Kontsevich, Sabbah, Saito and Yu, see  \cite{EsnaultEtAl$E_1$degenerationirregular17,SabbahIrregularHodge18} for example.  As a result, one can now endow
\(\mathrm{H}_{\rr{dR}}^{i}(U,f)\) with a decreasing \textit{irregular Hodge filtration}
\(F_{\rr{irr}}^\bullet\) indexed by rational numbers \(\alpha \in \mathbb{Q}\), yielding a collection of numerical invariants  
\[
    h^{\gamma}_{\rr{irr}}(U,f,i) = \dim \operatorname{gr}_{F_{\mathrm{irr}}}^{\gamma}\mathrm{H}^i_{\rr{dR}}(U,f)
    \quad (\gamma \in \mathbb{Q})
\]
called the \emph{irregular Hodge numbers}.   

\subsection{Main result}
As the classical Hodge numbers are fundamental algebraic
invariants of algebraic varieties, the irregular Hodge numbers should be viewed as the natural algebraic invariants of the pair \((U,f)\). Roughly speaking, these numbers can be thought of as numerical invariants that capture the singularity structure of the ``fiber at
infinity'' \(g^{-1}(\infty)\) in a compactification
\[
    \begin{tikzcd}
        U \ar[r,"j"] \ar[d,"f"] & X \ar[d,"g"] \\
        \mathbb{A}^1 \ar[r] & \mathbb{P}^1,
    \end{tikzcd}
\]
where \(j\) is an open immersion and \(X\) is a smooth projective variety.
 
The purpose of this paper is to report our quantitative understanding of the above idea.  We explain how to link the irregular Hodge numbers of a regular function \(f\) with a certain limiting Hodge numbers well-understood in classical Hodge theory.  To put the story into perspective, we first recall a ``topological''  interpretation of the  twisted de Rham cohomology, due to Deligne--Malgrange~\cite{DeligneEtAlSingularitesirregulieres07}, \makebox{Dimca--Saito}~\cite[Prop.~2.8]{dimca-saito_cohomology-general-fiber}, and Sabbah~\cite[Thm.~1.1]{sabbah_comparison-elementary-irregular}. 

Let \(f\colon U \to \mathbb{A}^1\) be a regular function on a purely
    \(n\)-dimensional smooth algebraic variety over \(\mathbb{C}\).
    Then for all but finitely many \(t\in \mathbb{C}\), we have an
    isomorphism of \(\mathbb{C}\)-vector spaces
    \[
        \mathrm{H}^{i}(U^{\mathrm{an}}, f^{-1}(t)^{\mathrm{an}})
        \cong \mathrm{H}^i_{\rr{dR}}(U,  f)
    \]
    for all $i\in \bb{Z}$, where \(\mathrm{H}^{\ast}(U^{\mathrm{an}},f^{-1}(t)^{\mathrm{an}})\) denotes
    the singular cohomology of the complex manifold \(U^{\mathrm{an}}\)
    relative to the fiber \(f^{-1}(t)^{\mathrm{an}}\), with complex
    coefficients. 
    
    The relative cohomology spaces appearing on the left-hand side of the above identity carry mixed Hodge structures.  As \(t\) varies along $\infty$, there is a quasi-unipotent monodromy action \(T\) on $\mathrm{H}^{i}(U^{\mathrm{an}}, f^{-1}(t)^{\mathrm{an}})$, as well as its decomposition into generalized eigenspaces
\[\mathrm{H}^{i}(U^{\mathrm{an}}, f^{-1}(t)^{\mathrm{an}}) = \bigoplus_{\lambda} \mathrm{H}^{i}(U^{\mathrm{an}}, f^{-1}(t)^{\mathrm{an}})_\lambda,\]
where \(\lambda\) runs over the eigenvalues of \(T\).  A natural Hodge-theoretic construction in this setting is the associated  \emph{limiting mixed Hodge structure} 
    $$(\mathrm{H}^{i}(U^{\mathrm{an}}, f^{-1}(t)^{\mathrm{an}}),F^\bullet_{\rr{lim}},W_\bullet^{\rr{lim}}),$$ 
which respects the above eigenspace decomposition.

Our main result establishes an identity between the limiting
Hodge numbers  and the irregular Hodge numbers.
 
\begin{thm}[{\ref{thm:KKP-numbers}}]\label{theorem:main} 
    With notation as above, for any $\alpha$ in $[0,1) $ and
    \(p \in \mathbb{Z}\), we have
    \[
        h^{p+\alpha}_{\rr{irr}}(U,f,i) = \dim\rr{gr}_{F_{\rr{lim}}}^p\mathrm{H}^{i}(U^{\mathrm{an}}, f^{-1}(t)^{\mathrm{an}})_\lambda,
    \]
    where $\lambda=\exp(-2\pi\mathrm{i}\alpha)$.
\end{thm}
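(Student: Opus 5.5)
We plan to reduce the statement to a comparison of two filtrations on the same vector space. Both are controlled by the Kontsevich complex on a good compactification and by Saito's theory of the \(V\)-filtration at infinity. First choose a compactification as in the diagram: \(X\) smooth projective, \(g\colon X\to\mathbb{P}^1\) extending \(f\). Arrange that \(D=X\setminus U\) is a simple normal crossing divisor and that the pole divisor \(P=g^*(\infty)=\sum e_iP_i\) is supported on \(D\). By the results of Esnault--Sabbah--Yu, \(F^{\lambda}_{\rr{irr}}\) is computed by the hypercohomology of the Kontsevich complexes \(\Omega^\bullet_f(\alpha)\). Here \(\Omega^p_f(\alpha)\) consists of log forms \(\omega\) with \(\omega\) and \(\mathrm{d}f\wedge\omega\) lying in \(\Omega^p(\log D)(\lfloor\alpha P\rfloor)\), and \(E_1\)-degeneration holds. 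So \(h^{p+\alpha}_{\rr{irr}}\) is an Euler-characteristic-free quantity: the dimension of \(\mathbb{H}^i\) of \(\rr{gr}\) of the filtered Kontsevich complex, taken in the appropriate jump.

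On the other side we use the Hodge-module description. The relative cohomology \(\mathrm{H}^i(U,f^{-1}(t))\) is the cohomology of the generic stalk of the variation \(R^{i}\) on a punctured disc near \(\infty\) attached to the pair \((U,f^{-1}(t))\). Equivalently, it is the cohomology of \(\phi\)-type objects built from \(Rf_*\mathbb{Q}_U\) and \(Rf_!\). The limiting Hodge filtration on the \(\lambda\)-part, with \(\lambda=e^{-2\pi\mathrm{i}\alpha}\), is \(\rr{gr}_V^{\alpha}\) of the Deligne canonical extension at \(\infty\) with its induced \(F\). By Steenbrink/Saito this is computed on \(X\) by a logarithmic complex \(\Omega^\bullet_X(\log D)\otimes\) (the \(V\)-filtration along \(P\)). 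The \(\alpha\)-eigenpart is cut out by twisting with \(\lfloor \alpha P\rfloor\) and taking graded pieces along \(P\). The target is therefore to express \(\dim\rr{gr}^p_{F_{\rr{lim}}}\) as the hypercohomology of an explicit graded complex on \(X\) or on \(P_{\rr{red}}\).

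The key step is a local filtered quasi-isomorphism. Near a point of \(P\) write \(f=1/(x_1^{e_1}\cdots x_k^{e_k})\). Then compare \(\rr{gr}\) of the Kontsevich complex at level \(\alpha\) with \(\rr{gr}_F^p\) of the log de Rham complex of the nearby-cycle (limit) object in the \(\lambda\)-eigenspace. This is the spirit of the KKP comparison \(f^{p,q}=h^{p,q}\). Both sides are Koszul-type complexes in the \(\mathrm{d}\log x_j\) and \(\mathrm{d}f/f=-\sum e_j\,\mathrm{d}\log x_j\). One then checks that the multiplication-by-\(\mathrm{d}f\) differential on the Kontsevich side matches the residue/nilpotent part on the limit side after grading. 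We expect this local computation, with the bookkeeping of the non-reduced multiplicities \(e_j\) and the fractional parts of \(\alpha e_j\), to be the main obstacle. If it fails directly, we will first semistably reduce via a base change \(t\mapsto t^N\). The base change makes the monodromy unipotent, and eigenspaces become \(\mu_N\)-isotypic parts. We then compare the \(\mu_N\)-equivariant Kontsevich complex of \(f^{1/N}\) with the unipotent case \(\alpha=0\), where the statement is the known \(E_1\)-degeneration identity for the unipotent limit.

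Finally, globalize. Both sides satisfy \(E_1\)-degeneration: one by Esnault--Sabbah--Yu, the other by Saito's theory for the limit MHS. So the dimensions of the graded hypercohomologies coincide in each degree \(i\). The identification of the underlying spaces with \(\mathrm{H}^i_{\rr{dR}}(U,f)\) via the Deligne--Malgrange/Dimca--Saito/Sabbah isomorphism recalled above shows that the total dimensions match over \(\alpha\in[0,1)\), \(p\in\mathbb{Z}\). This serves as a consistency check and gives the theorem.
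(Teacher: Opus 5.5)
Your route, a direct comparison of Kontsevich complexes with a Steenbrink-type complex for the limit, is not the paper's. As written it has a genuine gap exactly where the theorem lives: the ``local filtered quasi-isomorphism'' is neither proved nor correctly formulated.

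For a fixed level $\alpha$, the $p$-th graded piece of the Kontsevich complex for the stupid filtration is the single coherent sheaf $\Omega^p_f(\alpha)$ on all of $X$. By $E_1$-degeneration, $\dim\mathrm{H}^{i-p}(X,\Omega^p_f(\alpha))=\dim F^{p-\alpha}_{\mathrm{irr}}/F^{p+1-\alpha}_{\mathrm{irr}}$ on $\mathrm{H}^i_{\mathrm{dR}}(U,f)$. This is a sum of irregular Hodge numbers over a window of length one, and it mixes all monodromy eigenvalues. For example, take $f=x^3$ on $U=\mathbb{A}^1\subset X=\mathbb{P}^1$. Then $\Omega^0_f(0)=\mathcal{O}_{\mathbb{P}^1}(-3)$, whose $\mathrm{H}^1$ is $2$-dimensional and counts $h^{1/3}_{\mathrm{irr}}+h^{2/3}_{\mathrm{irr}}$, even though $h^0_{\mathrm{irr}}=0$ and $\mathrm{H}^1(U,f^{-1}(t))_1=0$.

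So twisting by $\lfloor\alpha P\rfloor$ does not cut out an eigenspace. Moreover, $\Omega^p_f(\alpha)$ restricts to $\Omega^p_U\neq 0$ on $U$, so it cannot be locally quasi-isomorphic to the $\lambda$-part of a nearby-cycle complex, which lives on $P_{\mathrm{red}}$. For $\lambda=1$ that complex must in addition be coned with a complex computing $\mathrm{H}^\bullet(U)$ to produce $\mathrm{H}^i(U,f^{-1}(t))$.

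To make your route work you would have to do three things, none of which is supplied:
\begin{enumerate}
\item Isolate single jumps via the $\mathbb{Q}$-indexed Yu filtration. Since $\Omega^q_f(\beta)$ governs indices starting at $q-\beta$, the index $p+\alpha$ with $0<\alpha<1$ sits at level $1-\alpha$ in degree $p+1$, not at level $\alpha$.
\item Construct a \emph{global} filtered comparison morphism to a limit complex that is valid for non-unipotent monodromy and non-reduced $P$. Isomorphisms of graded pieces that exist only locally say nothing about hypercohomology.
\item Prove $E_1$-degeneration for the resulting relative object.
\end{enumerate}
The final match of total dimensions is, as you note, only a consistency check.

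The fallback does not help. On $U'=\{(x,s)\in U\times\mathbb{A}^1 : f(x)=s^N\}$, the group $\mu_N$ acts by $s\mapsto\zeta s$, which does not preserve the function $s$. Hence $\mathrm{H}^\bullet_{\mathrm{dR}}(U',s)$ carries no $\mu_N$-action whose isotypic parts could recover $\mathrm{H}^\bullet_{\mathrm{dR}}(U,f)$. Generically, the push-forward of $(\mathcal{O}_{U'},\mathrm{d}+\mathrm{d}s)$ to $U$ has exponential factors $e^{\zeta f^{1/N}}$, unrelated to $e^{f}$. If you keep the invariant function $s^N$ instead, the pole orders are multiplied by $N$, so you never reach the reduced unipotent case. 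Base change at infinity is compatible with the limit side but not with the exponential twist.

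Also, even in the unipotent case, the identity $\dim\mathrm{H}^{i-p}(X,\Omega^p_f)=\dim\mathrm{gr}^p_{F_{\lim}}\mathrm{H}^i(U,f^{-1}(t))$ does not follow from $E_1$-degeneration on the two sides, which only compares the sums over $p$. It is the case $\alpha=0$ of the theorem itself.

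The paper avoids coherent computations entirely, in two steps:
\begin{enumerate}
\item The triangle $f_*\mathbb{Q}^{\mathrm{H}}_U\to\Pi f_*\mathbb{Q}^{\mathrm{H}}_U\to\mathrm{R}\Gamma(U,\mathbb{Q}^{\mathrm{H}}_U)\otimes\mathbb{Q}^{\mathrm{H}}_{\mathbb{A}^1}$ identifies $\psi_{1/t,\lambda}\Pi\,\mathrm{R}^{i-n}f_*\mathbb{Q}^{\mathrm{H}}$ with the limit MHS on $\mathrm{H}^i(U,f^{-1}(t))_\lambda$.
\item A stationary phase formula for exponential mixed Hodge structures, $\dim\mathrm{gr}^{p+\alpha}_{F_{\mathrm{irr}}}\Xi_{\mathrm{dR}}M=\dim\mathrm{gr}^p_F\psi_{t^{-1},\lambda}M$, reads $F_{\mathrm{irr}}$ off the $V$-filtration of the Brieskorn lattice, where the eigenvalue splitting is built in. It comes from Sabbah's inverse stationary phase on Rees modules, d\'evissage along weights, and $\psi_{v,1}G=\phi_{v,1}G$ because the Fourier transform $G$ is localized.
\end{enumerate}
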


Using Theorem~\ref{theorem:main}, we will provide an explicit formula \cref{proposition:unipotent-computation} for the irregular Hodge numbers for a class of regular functions with unipotent monodromy, relying on an explicit nearby cycle computation.

\subsection{Hodge numbers of Landau--Ginzburg models}
In mirror symmetry, the mirrors of  Fano varieties are expected to be \textit{Landau--Ginzburg models}. Roughly speaking, they are pairs $(U,f)$, consisting of non-proper varieties $U$ and regular functions $f$ on $U$ such that the pole divisor of $f$ being reduced. To get a similar relation on the Hodge diamond of a Fano variety with its mirror, Katzarkov, Kontsevich, and Pantev proposed in \cite[\S3]{KatzarkovEtAlBogomolovTian17} some numerics as candidates for the Hodge numbers of Landau--Ginzburg models, namely $h_{\rr{LG}}^{p,q}(U,f)$, $f^{p,q}_{\rr{LG}}(U,f)$, and $i^{p,q}_{\rr{LG}}(U,f)$, and conjectured that they are the same \cite[Conj. 3.6]{KatzarkovEtAlBogomolovTian17}. 

\begin{rmk}
    Katzarkov, Kontsevich, and Pantev also conjectured in \cite[Conj. 3.7]{KatzarkovEtAlBogomolovTian17} the numerical relation of the Hodge diamonds of Fano varieties and their Landau--Ginzburg models as
        $$f^{p,q}_{\rr{LG}}(U,f) = h^{p,\dim X-q}(X),$$
    which was verified for many cases such as the del Pezzo surfaces, Fano threefolds, toric complete intersections, and minuscule flag varieties in \cite{LUNTS2018189,Harder21,CP25,harder_lee_irregular_hodge_2025,QSS2024Irregular}.
\end{rmk} 

When \(f\) has unipotent monodromy around \(\infty\), we have \(h^{\alpha}_{\rr{irr}}(U,f,i) = 0\) unless \(\alpha \in \mathbb{Z}\); that is,
there are no fractional irregular Hodge numbers in this case. Thanks to the work of Esnault, Sabbah, and Yu in \cite{EsnaultEtAl$E_1$degenerationirregular17}, the numbers $f^{p,q}_{\rr{LG}}(U,f)$ are equal to the irregular Hodge numbers $h_{\rr{irr}}^{p}(U,f,p+q)$. On the other hand, the numbers $i^{p,q}_{\rr{LG}}(U,f)$ should be wrongly stated in the conjecture, as this can be seen by looking at the case of the mirror of projective spaces. As for the remaining  Landau--Ginzburg Hodge numbers, 
\[
    h^{p,q}_{\rr{LG}}(U,f) \coloneq
    \dim\operatorname{gr}^{W^{\rr{lim}}}_{2p} \rr{H}^{p+q}(U^{\mathrm{an}}, f^{-1}(t)^{\mathrm{an}}),
\]
previous works of \cite{Shamoto2018,sabbah2018properties, BGRSL23} showed that
\[
    h^{p,q}_{\rr{LG}}(U,f) =f^{p,q}_{\rr{LG}}(U,f) 
\]
for cases such as  convenient non-degenerate Laurent polynomials. Moreover, Shamoto  proved that when $f$ is weakly tame, the above
equality holds if and only if the limiting Hodge structure on $\rr{H}^{p+q}(U^{\mathrm{an}}, f^{-1}(t)^{\mathrm{an}})$
is of Hodge--Tate type.  

Our theorem \cref{theorem:main} provides a proof of Shamoto's theorem in general, and gives the precise equality that should hold in the unipotent case. 
\begin{cor}[{\ref{prop:kkp}}]\label{cor:KKP-numbers}
    The numbers $h^{p,q}_{\rr{LG}}(U,f)$ and $f^{p,q}_{\rr{LG}}(U,f)$ are equal if and only if the limiting Hodge structure on $\rr{H}^{p+q}(U^{\mathrm{an}}, f^{-1}(t)^{\mathrm{an}})$ is of Hodge--Tate type. 
\end{cor}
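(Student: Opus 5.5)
The plan is to reduce the corollary to an elementary comparison of two ways of summing the Hodge numbers of one mixed Hodge structure. Fix $k=p+q$ and put $H \coloneq \rr{H}^{k}(U^{\mathrm{an}}, f^{-1}(t)^{\mathrm{an}})$ with its limiting mixed Hodge structure $(F^\bullet_{\rr{lim}}, W^{\rr{lim}}_\bullet)$. I read the equivalence as: $h^{p,q}_{\rr{LG}}=f^{p,q}_{\rr{LG}}$ for all $p+q=k$ if and only if $H$ is Hodge--Tate. I work in the Landau--Ginzburg setting of the corollary, where the monodromy at infinity is unipotent, so $H=H_1$.

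\textbf{Step 1: rewrite both sides via the limiting Hodge numbers.} By Esnault--Sabbah--Yu, $f^{p,q}_{\rr{LG}}(U,f)=h^{p}_{\rr{irr}}(U,f,k)$. Applying Theorem~\ref{theorem:main} with $\alpha=0$ (so $\lambda=1$) gives
\[
f^{p,q}_{\rr{LG}}=\dim \rr{gr}^p_{F_{\rr{lim}}} H .
\]
Let $i^{a,b}\coloneq \dim \rr{gr}^a_{F_{\rr{lim}}}\rr{gr}^{W^{\rr{lim}}}_{a+b}H$ be the Hodge numbers of the limiting mixed Hodge structure. Since $F_{\rr{lim}}$ induces a pure Hodge structure on each graded piece of $W^{\rr{lim}}$, these satisfy $i^{a,b}=i^{b,a}$. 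By strictness (for instance via a Deligne splitting),
\[
d_p\coloneq f^{p,q}_{\rr{LG}}=\sum_b i^{p,b}, \qquad e_p\coloneq h^{p,q}_{\rr{LG}}=\sum_{a+b=2p} i^{a,b}.
\]
Here $H$ is Hodge--Tate exactly when $i^{a,b}=0$ for all $a\neq b$.

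\textbf{Step 2: the easy direction.} If $H$ is Hodge--Tate, then $d_p=i^{p,p}=e_p$ for every $p$.

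\textbf{Step 3: the converse, by a second-moment argument.} This is the only step that needs an idea, and it is short. Assume $d_p=e_p$ for all $p$. Then
\[
\sum_p p^2 d_p=\sum_{a,b} a^2 i^{a,b}, \qquad \sum_p p^2 e_p=\sum_{a,b}\Bigl(\tfrac{a+b}{2}\Bigr)^2 i^{a,b}.
\]
Symmetrizing the first sum with $i^{a,b}=i^{b,a}$ replaces $a^2$ by $(a^2+b^2)/2$. The difference of the two sums is therefore
\[
\sum_{a,b}\frac{(a-b)^2}{4}\, i^{a,b}.
\]
By hypothesis this difference is zero. Every term is nonnegative, so $i^{a,b}=0$ whenever $a\neq b$, and $H$ is Hodge--Tate.

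The main obstacle is really the input rather than this argument. It consists of justifying that in the Landau--Ginzburg setting the monodromy at infinity is unipotent, so that Theorem~\ref{theorem:main} with $\lambda=1$ captures all of $H$, together with the Esnault--Sabbah--Yu identification $f^{p,q}_{\rr{LG}}=h^p_{\rr{irr}}$. If unipotence is not assumed, I would instead replace $H$ by $H_1$ throughout and state the equivalence for $\rr{gr}^{W^{\rr{lim}}}H_1$.
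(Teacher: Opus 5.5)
Your route is the paper's: Esnault--Sabbah--Yu turns $f^{p,q}_{\rr{LG}}$ into irregular Hodge numbers. Unipotence, which the paper deduces from the pole divisor being reduced, together with Theorem~\ref{thm:KKP-numbers} at $\lambda=1$ turns these into $\dim\mathrm{gr}_{F_{\rr{lim}}}H$, to be compared with $\dim\mathrm{gr}^{W}_{2\bullet}H$. The paper simply asserts that this comparison holds for all $p$ if and only if $H$ is Hodge--Tate. Your variance argument is a clean justification of the nontrivial direction, and reading the statement as holding for all $p+q=k$ at once is correct.

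There are two fixes to make.

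\textbf{Odd weights.} $e_p$ only sees even weights, so $\sum_p p^2 e_p=\sum_{a+b\ \mathrm{even}}\bigl(\tfrac{a+b}{2}\bigr)^2 i^{a,b}$, not a sum over all $(a,b)$. The difference of the two moments is therefore
\[
\sum_{a+b\ \mathrm{even}}\frac{(a-b)^2}{4}\,i^{a,b}\;+\sum_{a+b\ \mathrm{odd}}\frac{a^2+b^2}{2}\,i^{a,b}.
\]
This is still a sum of nonnegative terms, and its vanishing forces $i^{a,b}=0$ for all $a\neq b$, including every odd weight. So your conclusion survives. Alternatively, compare $\sum_p d_p=\dim H$ with $\sum_p e_p$ first to kill the odd weights.

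\textbf{Indexing.} By the definition of $f^{p,q}_{\rr{LG}}$ and Remark~\ref{rmk:irregular-Hodge-numbers}, $f^{p,q}_{\rr{LG}}=\dim\mathbb{H}^p(X,\Omega^q_X(\log D,f))=h^{q}_{\rr{irr}}(U,f,p+q)$. It is not $h^p_{\rr{irr}}$, whatever the introduction's loose phrasing suggests. What you actually obtain is $f^{p,q}_{\rr{LG}}=d_q$ against $h^{p,q}_{\rr{LG}}=e_p$. To reduce to your condition $d_q=e_q$ you need $e_p=e_q$ for $p+q=k$, i.e.\ $h^{p,q}_{\rr{LG}}=h^{q,p}_{\rr{LG}}$. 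The paper invokes exactly this symmetry. It comes from $N^{p-q}\colon\mathrm{gr}^W_{2p}\xrightarrow{\sim}\mathrm{gr}^W_{2q}$ for the monodromy weight filtration centered at $p+q$, which both you and the paper implicitly identify with $W^{\rr{lim}}$. Both directions of the equivalence use this symmetry, so add it to Step~1.
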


\begin{exe}\label{example:counterexample}
    Let $U=\mathbb{P}^2-Z(x_0^3+x_1^3+x_2^3+3x_0x_1x_2)$ and 
        $$f=\frac{x_0x_1x_2}{x_0^3+x_1^3+x_2^3+3x_0x_1x_2}.$$ 
    When $i=1,3$, the cohomology groups $\rr{H}^{i}(U^{\mathrm{an}}, f^{-1}(t)^{\mathrm{an}})$ are $2$-dimensional, and the corresponding limiting Hodge structures are not of Hodge--Tate type.
    However, the twisted de Rham cohomologies $\rr{H}^i_{\dR}(U,f)$ are $0$ unless $i=2$, and the corresponding irregular Hodge numbers of $\rr{H}^2_{\dR}(U,f)$ will be calculated in \cref{example:calculation2}.
\end{exe}

Furthermore, using the formula in \cref{proposition:unipotent-computation}, one can find many examples  in the ``maximally degenerating'' case such that the Katzarkov--Kontsevich--Pantev conjecture is valid.

\subsection{Deformation invariance of irregular Hodge numbers}
Another  result of this paper is the deformation invariance of irregular Hodge numbers for non-degenerate functions. In classical Hodge theory, the Hodge numbers of a smooth projective variety remain unchanged under deformation. 

In the irregular setting, the partial analogue of the geometric condition ``smooth projective'' is that the function \(f\) is ``non-degenerate'' as defined by Katz~\cite[\S4]{KatzSommesexponentielles80} and Mochizuki~\cite[Def. 2.6]{MochizukiTwistorGKZ15}, see \cref{defn:non-degenerate}. Here, we consider triples \((X, D, f)\), where \(X\) is a smooth projective variety, \(D\) is a simple normal crossing divisor on \(X\), and \(f\colon X -D \to \mathbb{A}^1\) is a regular function satisfying a certain condition imposed by Katz.  We will show that the irregular Hodge numbers remain constant in families of non-degenerate functions.

\begin{thm}\label{intro::non-degenerate}
    Let \(X\) be a smooth proper algebraic variety, and let \(D \subset X\) be a simple normal crossing divisor.  Denote by \(U\) the complement of the support of \(D\) in \(X\). For any two non-degenerate functions \(f\) and \(g\) on \((X, D)\), the irregular Hodge numbers of \((U, f)\) and \((U, g)\) coincide.
\end{thm}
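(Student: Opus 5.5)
Via Theorem~\ref{theorem:main}, the irregular Hodge numbers $h^{p+\alpha}_{\rr{irr}}(U,f,i)$ are the limiting Hodge numbers $\dim \rr{gr}^p_{F_{\rr{lim}}}\rr{H}^i(U^{\rr{an}},f^{-1}(t)^{\rr{an}})_\lambda$ with $\lambda=\exp(-2\pi\mathrm{i}\alpha)$. So I will not compare irregular Hodge filtrations directly. Instead I connect $f$ and $g$ by a path of non-degenerate functions on $(X,D)$ and show that the limiting Hodge numbers of the relative cohomology near $t=\infty$ are locally constant along that path.

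\textbf{Step 1: the parameter space.} In Katz's setting, a non-degenerate $f$ has a pole divisor $P=\sum m_i D_i$ supported on $D$, and the non-degeneracy condition concerns the leading terms of $f$ along the strata of $D$. Since the irregular Hodge numbers split according to the pole divisor (the $\lambda$-decomposition is governed by the multiplicities $m_i$), I will first reduce to the case where $f$ and $g$ have the same pole divisor. Perhaps this is forced. Otherwise I would argue that the Hodge numbers are given by a formula in the strata data that does not depend on the $m_i$; this needs checking.

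With $P$ fixed, the functions with pole divisor exactly $P$ form a Zariski open subset of the finite-dimensional affine space $\rr{H}^0(X,\cc{O}_X(P))$. Non-degeneracy is a further Zariski open condition, namely the non-vanishing of discriminant-type conditions on the leading parts on every stratum. If this open set is nonempty it is connected, so $f$ and $g$ lie in one connected family $\{f_s\}_{s\in S}$.

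\textbf{Step 2: local constancy of the limiting Hodge data.} Consider the family $\mathcal{U}=U\times S\to \bb{A}^1\times S$ given by $(x,s)\mapsto f_s(x)$, and the relative cohomology $\rr{H}^i(U,f_s^{-1}(t))$ for $|t|\gg0$. Non-degeneracy gives a uniform topological tameness at infinity: near $t=\infty$, after the standard toroidal or semistable-type modification along $D$, the fibers are transversal to the strata of $D$. The variation of mixed Hodge structure over $\{|t|>R\}\times S$ is therefore admissible with monodromy at $t=\infty$ constant in $s$, and its dimension is constant (for instance by Katz's dimension formula, or simply because the twisted de Rham cohomology has constant rank for non-degenerate families). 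Admissibility and the nilpotent-orbit theorem then let me form the limiting mixed Hodge structure relative to $t=\infty$ as a variation of mixed Hodge structure over $S$. Hence $\dim \rr{gr}^p_{F_{\rr{lim}}}$ of each $\lambda$-eigenspace is constant in $s$, and Theorem~\ref{theorem:main} finishes the proof.

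A possible alternative for Step 2 is to use the nearby-cycle description coming from the proof of Theorem~\ref{theorem:main}. Non-degeneracy makes $\psi_{1/t}$ of the relative complex computable from the strata of $D$ and the leading terms. If the Hodge numbers come out as a combinatorial formula in the Hodge numbers of strata and of hypersurfaces in them, then invariance is immediate because those pieces deform smoothly.

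\textbf{Main obstacle.} The hard step is Step 2, and specifically the claim that the limit at $t=\infty$ varies nicely with $s$. This needs a simultaneous resolution of $(X,D)\times S$ over $\bb{P}^1\times S$ that is equisingular along $S$, that is, the relevant nearby-cycle sheaves are locally constant in $s$. I would first verify this using the local normal form of a non-degenerate function near points of $D$: in suitable coordinates, $f=u/\prod x_i^{m_i}$ with $\{u=0\}$ transversal to the strata. That normal form depends continuously on $s$, and should give a topologically trivial family near infinity.
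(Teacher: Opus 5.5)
Your overall plan matches the paper's. You reduce to limiting Hodge numbers at $t=\infty$; the paper uses Corollary~\ref{cor:stationary-phase-functions} directly, which is equivalent to Theorem~\ref{theorem:main}. You let $s$ run over the connected Zariski-open set $S\subset\mathrm{H}^0(X,\mathcal{O}_X(D))$ of sections with $Z(s)$ smooth and transversal to $D$, and you invoke a nilpotent orbit theorem with parameters.

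In Step~1 no reduction is needed. In Definition~\ref{defn:non-degenerate} the divisor $D=\sum e_iD_i$ carries its multiplicities, and $s_\infty=\prod\sigma_i^{e_i}$ is fixed up to a scalar. So $f=s_1/s_\infty$ and $g=s_2/s_\infty$ automatically have the same pole divisor. Your fallback, a formula independent of the $m_i$, would be false. On $\mathbb{P}^1$ with $|D|=\{\infty\}$, the function $x$ has $\mathrm{H}^1_{\mathrm{dR}}(\mathbb{A}^1,x)=0$. But $x^2-1$, which is non-degenerate for $D=2[\infty]$, has $h^{1/2}_{\mathrm{irr}}(\mathbb{A}^1,x^2-1,1)=1$.

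\textbf{Step~2 is the genuine gap: it carries the whole content of the theorem, and you assert it rather than prove it.} Because $f_s$ is not proper, you need uniform control of the fibers near $D$ to claim that $\mathrm{H}^i(U,f_s^{-1}(t))$ is a variation over $\{|t|>R\}\times S$ with $R$ locally uniform in $s$. A ``toroidal or semistable modification'' is neither needed nor, as stated, sufficient. Constancy of $\dim\mathrm{H}^i_{\mathrm{dR}}(U,f_s)$ is not available beforehand either; it is a consequence of the theorem.

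The missing observation is that the closure of $f_s^{-1}(t)$ in $X$ is the pencil member $Z(s-ts_\infty)$. Near a point of $B_s=Z(s)\cap D$, take coordinates with $s_\infty=\prod_{i\in I}x_i^{e_i}$ and $y=s$. There the closure is the graph $y=t\prod x_i^{e_i}$, which is smooth for every $t$ and meets $D$ transversally exactly in $B_s$. Near $D\setminus B_s$ one has $f_s=u/\prod x_i^{e_i}$ with $u$ a unit, and $f_s$ has no critical points close to $D$.

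The paper uses this by replacing $U$ with $\tilde U_s=\{(t,x): s(x)=ts_\infty(x)\}\subset\mathbb{A}^1\times X$, on which $\tilde f_s$ is proper. The argument then runs in three steps:
\begin{enumerate}
\item Proposition~\ref{lemma:snc-pair-implies-pure} gives $\Pi\,\mathrm{R}^if_{s*}\mathbb{Q}^{\mathrm{H}}_U\simeq\Pi\,\mathrm{R}^i\tilde f_{s*}\mathbb{Q}^{\mathrm{H}}_{\tilde U_s}$. The discrepancy lives on $\mathbb{A}^1\times B_s$, where the map is a projection, so only constant modules occur, and $\Pi$ annihilates them.
\item Over $\{|t|>r\}\times\bar\Delta$, the family $\{s=ts_\infty\}$ is smooth and proper. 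Ehresmann and the nilpotent orbit theorem with parameters then give constancy of the Hodge numbers of $\psi_{1/t,\lambda}\mathrm{R}^i\tilde f_{s*}\mathbb{Q}^{\mathrm{H}}$ (Lemma~\ref{lem:independence-2}).
\item The relative version of the triangle in Proposition~\ref{prop:distinguished-triangle} has connecting maps whose images are variations over $S$. This transfers the constancy to $\psi_{1/t,\lambda}\Pi$ (Proposition~\ref{prop:independence}).
\end{enumerate}

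Your non-proper route can be completed from the same local computation. It shows that critical values are bounded locally uniformly in $s$, and that $(Z(s-ts_\infty),B_s)$ is a relative snc pair. Hence $(U,f_s^{-1}(t))$ is locally trivial by Ehresmann for pairs, and $\psi_{1/t}$ of the resulting mixed Hodge module is smooth along $S$. As written, however, precisely this step is missing.
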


\subsection{Organization}

The structure of this article is as follows.
\cref{section:exp-mhm} collects background on Saito's mixed Hodge modules,  and exponential mixed Hodge structures. In particular, the information of a twisted de Rham cohomology with its associated irregular Hodge filtration can be packaged into an exponential mixed Hodge structure.

In \cref{section:stationary-phase}, we establish a modified version of Sabbah--Yu's stationary phase formula for exponential mixed Hodge structures. This formula is important for \cref{intro::non-degenerate}, as it relates the irregular Hodge numbers to the limit mixed Hodge structure at infinity.

Building on the above inputs, in \cref{section:KKP-numbers} we prove \cref{theorem:main}, Then, we recall the conjecture of Katzarkov--Kontsevich--Pantev \cref{conj:KKP} and discuss the relation between Landau--Ginzburg Hodge numbers and the irregular Hodge numbers.

\cref{section:snc-pairs} introduces non-degenerate functions on simple normal crossing pairs following Katz and Mochizuki. They are analogies of smooth projective varieties in the irregular setting. We compare them with compactified tame Landau--Ginzburg models and other non-degenerate conditions in \cref{subsection:non-degenerate-functions}. Then, we prove basic purity and vanishing statements for twisted de Rham cohomologies of non-degenerate functions in \cref{subsection:vanishing}.  To finish, we prove \cref{intro::non-degenerate} in \cref{sec:proof-of-main} using some version of nilpotent orbit theorem.

In the end, we provide an explicit formula of irregular Hodge numbers for strongly non-degenerate functions and give examples in \cref{sec:explicit-formula}.

\section{Preliminaries}\label{section:exp-mhm}
In this section, we recall some background on Saito's mixed Hodge modules and exponential mixed Hodge structures.

\subsection{Mixed Hodge modules}
We recall the basic facts about Saito's mixed Hodge modules that are used in this paper; see \cite{SaitoMixedHodge90} for details.
For any complex algebraic variety $X$, there is an abelian category $\mathrm{MHM}(X)$ of algebraic mixed Hodge modules. Roughly speaking, an object of $\mathrm{MHM}(X)$ consists of the data of an underlying rational perverse sheaf, a regular holonomic filtered $\mathscr{D}_X$-module $(\mathcal{M},F^\bullet)$, and a weight filtration; these pieces satisfy some constraints ensuring that the construction is compatible with the classical Hodge theory. In particular, the category  $\mathrm{MHM}(X)$ is equipped with the faithfully exact functor 
    $$\mathrm{rat}\colon \mathrm{MHM}(X)\to\mathrm{Perv}(X;\mathbb{Q})$$
sending a mixed Hodge module to its underlying rational perverse sheaf. 

When $X$ is a point, the category of mixed Hodge modules coincide with the category of polarized mixed Hodge structures. Over a general base $X$, smooth mixed Hodge modules on $X$ are admissible variation of mixed Hodge structures. Moreover, since each mixed Hodge module on $X$ is smooth on a dense open subset $U$ of an irreducible component of $X$, hence, its restriction to $U$ is an admissible variation of mixed Hodge structures on $U$.

There is also the six-functor formalism for mixed Hodge modules.  For any algebraic morphism $f$ one has the derived functors
\[
    f_*,\; f_!,\; f^*,\; f^!,\; \otimes,\; \mathbb{D},
\]
defined between the bounded derived category of mixed Hodge modules. These operations lift those on perverse sheaves, i.e., these functors commute with the rational realization functor
    $$\mathrm{rat}\colon \rr{D}^b\mathrm{MHM}(X)\to \rr{D}^b_c(X;\mathbb{Q}).$$
In this paper, we denote by $\bb{Q}_X^{\rr{H}}$ to be the (shifted) constant Hodge module, whose  underlying perverse sheaf  is $\bb{Q}_X[\dim X]$. Hence, the cohomology of $\bb{Q}_X^{\rr{H}}$ is the classical singular cohomology of $X$ with its canonical mixed Hodge structure:
    $$\rr{H}^i(X,\bb{Q}_X^{\rr{H}})=\rr{H}^{i+\dim X}(X,\bb{Q}).$$

For a regular function $g$ one has the nearby and vanishing cycle functors
    $$\psi_g,\phi_g\colon   \mathrm{MHM}(X)\to  \mathrm{MHM}(g^{-1}(0)).$$
They are constructed on the filtered $\mathscr{D}$-module side via the $V$-filtration, commute with the rational realization functor
$\mathrm{rat}$. In particular, these functors carry the monodromy action together with its decomposition into generalized eigenspaces $\psi_gM=\psi_{g,1}M\oplus \psi_{g,\neq 1}M$ and $\phi_gM=\phi_{g,1}M\oplus \psi_{g,\neq 1}M$ respectively.

\subsection{Rees modules}
The underlying filtered $\mathscr{D}$-module $(\mathcal{M},F^\bullet)$ of a mixed Hodge module $M$ can be encoded in its Rees module 
\begin{equation}\label{eq:Rees-module}
     R_FM:=\bigoplus_{p\geq 0} z^p F^p\mathcal{M}
\end{equation}
as a filtered $\mathscr{D}_X[z]$-module, where $z$ is a formal variable.  The Rees module $R_FM$ is a coherent $\mathscr{D}_X[z]$-module, and the filtration on $\mathcal{M}$ is recovered from $R_FM$ by taking the associated graded pieces. For Rees modules, there are similar notions of coherence, regularity, holonomicity, strict specializability. So we can do similar constructions to Rees modules as for $\mathscr{D}$-modules, see  \cite{MHMProject} for more details.

\subsection{Exponential mixed Hodge structures}
\textit{Exponential mixed Hodge structures} are mixed Hodge modules on the affine line with no cohomologies, as introduced by Kontsevich and Soibelman in \cite{KontsevichSoibelmanCohomologicalHall11}. We recall some basic properties of exponential mixed Hodge structures following \cite[Appx.]{FresanEtAlHodgetheory22}. Concretely, let $\pi\colon \mathbb{A}^1\to\rr{Spec}\,\mathbb{C}$ be the structure morphism. Define the full subcategory
\[
\mathrm{EMHS}\subset \mathrm{MHM}(\mathbb{A}^1)
\]
consisting of those $\mathcal{M}\in\mathrm{MHM}(\mathbb{A}^1)$ with $\pi_*\mathcal{M}=0$. There is an exact idempotent endofunctor
\begin{equation}\label{eq:Pi}
    \Pi\colon \mathrm{MHM}(\mathbb{A}^1)\to\mathrm{MHM}(\mathbb{A}^1),
    \qquad
    \Pi(\mathcal{M}) \;:=\; \operatorname{sum}_*\bigl(j_!\mathbb{Q}_{\mathbb{G}_m}^{\mathrm{H}}\boxtimes\mathcal{M}\bigr),
\end{equation}
where $j\colon\mathbb{G}_m\hookrightarrow\mathbb{A}^1$ is the inclusion and $\operatorname{sum}\colon\mathbb{A}^1\times\mathbb{A}^1\to\mathbb{A}^1$ is addition. The functor $\Pi$ is an exact projector onto $\mathrm{EMHS}$: its essential image equals $\mathrm{EMHS}$, and $\Pi(\mathcal{N})=\mathcal{N}$ for $\mathcal{N}\in\mathrm{EMHS}$. In particular, any constant Hodge modules on $\mathbb{A}^1$ is annihilated by $\Pi$.

Each exponential mixed Hodge structure $M$ has a weight filtration induced from that of $\mathrm{MHM}(\mathbb{A}^1)$, defined as
    $$W_k^{\rr{EMHS}} M:=\Pi(W_k^{\mathrm{MHM}} M),$$
where $W_\bullet^{\mathrm{MHM}}$ is the weight filtration in the category $\mathrm{MHM}(\mathbb{A}^1)$. In general, the weight filtration $W^{\rr{EMHS}}_\bullet$ is different from the weight filtration in $\mathrm{MHM}(\mathbb{A}^1)$. Unless otherwise specified, the weight filtration $W_\bullet$ on an exponential mixed Hodge structure $M$ in this paper always refers to $W_\bullet^{\rr{EMHS}}$.

The de Rham fiber functor is an exact functor
    $$\Xi_{\rr{dR}}\colon \mathrm{EMHS}\to \rr{Vect}_{\bb{C}},
    \quad 
    \rr{H}^1_{\rr{dR}}(\bb{A}^1,M\otimes \mathcal{E}^t)$$ 
where $\mathcal{E}^t$ is the exponential $\scr{D}_{\bb{A}^1}$-module $(\cc{O},\rr{d}+\rr{d}t)$. Alternatively, it sends $M$ to
the fiber at $1\in \bb{A}^1$ of the Fourier transform of $M$. 

Let $X$ be an algebraic variety of dimension $n$. For a regular function $g\colon X\to \bb{A}^1$ and an integer $i$, we consider the following exponential mixed Hodge structures 
\begin{equation}\label{eq:EMHS-functions}
	\mathrm{H}^i(X,g):=\Pi(\mathcal{H}^{i-n}g_*\bb{Q}_X^{\rr{H}}),
    \quad
     \mathrm{H}^i_c(X,g):=\Pi(\mathcal{H}^{i-n}g_!\bb{Q}_X^{\rr{H}}).
\end{equation}
The exponential mixed Hodge structures $\rr{H}^i(X,g)$, $\rr{H}_c^i(X,g)$ are mixed of weights at least $i$ and mixed of weights at most $i$ respectively by \cite[A.19]{FresanEtAlHodgetheory22}. In particular, for a proper function $g$, we have $\rr{H}^i(X,g)=\rr{H}_c^i(X,g)$ pure of weight $i$.

\subsection{Irregular Hodge filtrations on the de Rham fibers}
On the de Rham fiber functor of objects in $\mathrm{EMHS}$, one can associate an \textit{irregular Hodge filtration} $F_{\rr{irr}}^\bullet \Xi_{\rr{dR}}M$, constructed using a
generalization of Deligne's filtration \cite[\S6]{SabbahFourierLaplace10}; see also   \cite[Prop. 2.61 \& Def. 3.2]{SabbahIrregularHodge18}.

Let  $\bb{A}^1_v$ be the dual affine line of $\bb{A}^1_t$ with coordinate $v$, the Fourier--Laplace transform $\mathrm{FL}$ of a $\bb{C}[t]\langle \partial_t\rangle$-module $N$, denoted by $\rr{FL}(N)$, has the same underlying set as that of $N$, which is viewed as a $\bb{C}[v]\langle \partial_v\rangle$-module with  the actions of $v$ and $\partial_v$ on $\rr{FL}(N)$ being defined by $-\partial_t$ and $t$ respectively. 

To an exponential mixed Hodge structure $M$, viewed as a mixed Hodge module on $\bb{A}^1_t$, we denoted by $(M,F^\bullet M)$ the underlying filtered $\scr{D}$-module of $M$. By \cite[Prop. 12.3.5]{KatzExponentialsums90}, we have 
$$
    \rr{FL}(M)=\rr{FL}\circ \Pi(M)=j_+j^+\rr{FL}(M),
$$
which implies that $G:=\rr{FL}(M)$ is localized, i.e., $G=\mathbb{C}[v,v\inv]\otimes_{\bb{C}[t]}G$.

The associated Brieskorn lattice $G^{0}=G_{(F)}^{0}$ is a free $\mathbb{C}[u]$-module $ (u=v^{-1})$ equipped with an action of $u^{2} \partial_{u}$, where   Formally, $G^0$ is defined as 
\[
G^{0}_{(F)}  :=  \sum_{p\in\mathbb Z} u^{p}\,\operatorname{FL}\bigl(F^{-p}M\bigr),
\]
(see e.g. \cite[Appx.]{SabbahYuirregularHodge15} for details), and $\{G_{(F)}^{p}=u^{p} G^{0}\}$ defines  a  decreasing filtration  on $G$. Let $V^\ast G$ be the $V$-filtration of $G$ with respect to the function $t$. The irregular Hodge filtration on the de Rham fiber $\Xi_{\rr{dR}}M=G/(u-1)G=G^0/(u-1)G^0$ is defined by the formula

\begin{equation}\label{eq:irregular-Hodge-filtration}
    F_{\mathrm{irr}}^{\gamma} \Xi_{\rr{dR}}M=\left(V^{\gamma} \cap G^{0}\right) /\left(V^{>\gamma} \cap G^{0}\right) \cap(u-1) G^{0} .
\end{equation}
 
For exponential mixed Hodge structures coming from geometry, the de Rham fiber of $\mathrm{H}^i_{?}(X,g)$ are isomorphic to $\mathrm{H}^i_{\mathrm{dR},?}(X,g)$ for $?\in \{\emptyset,c\}$. In this case,  Esnault, Sabbah, and Yu showed in \cite[\S1]{EsnaultEtAl$E_1$degenerationirregular17} that the irregular Hodge filtration on the de~Rham fiber coincides with the Kontsevich filtration and the Yu filtration \cite{YuIrregularHodge14} on the twisted de Rham cohomologies $\mathrm{H}^i_{\mathrm{dR},?}(X,g)$.

\section{Stationary phase formula}\label{section:stationary-phase} 
For a mixed Hodge module on the affine line with non-unipotent monodromy at $\infty$, Sabbah and Yu proved a stationary phase formula \cite[(7)]{SabbahYuIrregularHodge19}, relating its limiting Hodge numbers at infinity with the irregular Hodge numbers of the fiber of its Fourier transform. In this section, we will give a modified version of the stationary phase formula for exponential mixed Hodge modules, without the non-unipotency assumption.
\begin{prop}\label{prop:stationary-phase}
     Given an exponential mixed Hodge structure $M$, for any $\alpha\in [0,1)$ and $p\in \bb{Z}$, we have
        $$\dim \rr{gr}^{\alpha+p}_{F_{\rr{irr}}}\Xi_{\rr{dR}}(M) = \dim
         \rr{gr}^p_F \psi_{t\inv,\lambda}M  ,$$
    where $\lambda=\exp(-2\pi i \alpha)$.
\end{prop}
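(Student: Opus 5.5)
We plan to reduce the statement to the stationary phase formula of Sabbah--Yu \cite[(7)]{SabbahYuIrregularHodge19}, which gives exactly this identity for a mixed Hodge module $N$ on $\bb{A}^1_t$ whose monodromy at $\infty$ has no eigenvalue $1$. The point to handle is the unipotent part $\lambda=1$, i.e.\ $\alpha=0$. So we split by eigenvalue. For $\lambda\neq 1$ we apply Sabbah--Yu directly to $M$ (or to a suitable twist), and for $\lambda=1$ we modify $M$ so that its unipotent part at infinity is moved to a non-unipotent eigenvalue without changing the relevant Hodge data.

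\textbf{Step 1 (generalized eigenspace splitting).} Both sides split according to $\lambda$. On the right this is built into $\psi_{t^{-1}}M=\bigoplus_\lambda\psi_{t^{-1},\lambda}M$. On the left, the $V$-filtration at $u=0$ (equivalently $v=\infty$) of $G=\rr{FL}(M)$ gives a decomposition of $\rr{gr}_{F_{\rr{irr}}}$ by the class of $\gamma$ modulo $\bb{Z}$, via \eqref{eq:irregular-Hodge-filtration}. The fractional part $\alpha$ of $\gamma$ corresponds to the eigenvalue $\exp(-2\pi i\alpha)$ of the formal monodromy of $G$ at $u=0$. By the formal stationary phase (Laplace transform exchanges local data at $t=\infty$ of slope $<1$ with local data at $v=0$), this monodromy matches the monodromy of $\psi_{t^{-1}}M$ with $\lambda$ preserved. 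The sign conventions must be checked against Sabbah--Yu.

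\textbf{Step 2 ($\lambda\neq1$).} For $\alpha\in(0,1)$ we argue that only $\psi_{t^{-1},\lambda}M$ with $\lambda\ne1$ enters. Concretely, pass to $N=M\otimes\cc{K}$, where $\cc{K}$ is a rank-one Kummer Hodge module on $\bb{G}_m$ with monodromy $\mu\neq1$ at $\infty$, extended by $j_*$. Alternatively, apply Sabbah--Yu's proof, which is local at $\infty$ eigenvalue by eigenvalue, restricted to the non-unipotent summands. Their argument then gives the equality for each $\lambda\ne1$.

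\textbf{Step 3 ($\lambda=1$).} Here we use $M\in\mathrm{EMHS}$, i.e.\ $\pi_*M=0$, so $\Pi(M)=M$. The plan is to compare $M$ with the twisted module $M_\beta:=M\otimes j_*\cc{K}_\beta$, where $\beta\in(0,1)$ is small and $\cc{K}_\beta$ is a Kummer module $t^{\beta}$. Twisting by $\cc{K}_\beta$ shifts all eigenvalues at infinity by $\exp(-2\pi i\beta)$, so $M_\beta$ has no unipotent part there. Moreover $\psi_{t^{-1},e^{-2\pi i\beta}}M_\beta\cong\psi_{t^{-1},1}M$ as mixed Hodge structures, up to a known Hodge shift. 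On the Fourier side we must show
\[
\dim\rr{gr}^{p}_{F_{\rr{irr}}}\Xi_{\rr{dR}}M=\dim\rr{gr}^{p+\beta}_{F_{\rr{irr}}}\Xi_{\rr{dR}}M_\beta\qquad(\beta\ll1).
\]
This uses semicontinuity or rigidity of irregular Hodge numbers in the twisting parameter, as in Sabbah--Yu's treatment of jumps, together with the vanishing of $\pi_*M$. That vanishing ensures that the invariant part at $t=\infty$ contributes no extra classes: $\rr{H}^*(\bb{A}^1,M)=0$ kills the discrepancy between $j_*$ and $j_!$ at $0$ and $\infty$.

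An alternative route to Step 3, which we would try first, is to compute directly. For $\alpha=0$, the graded pieces $V^{p}\cap G^0/V^{>p}\cap G^0$ project to $\rr{gr}^0_V$ of $G$ at $u=0$. For a localized Fourier transform of an $M$ with $\pi_*M=0$, this is identified, with its Brieskorn-lattice filtration, with $\psi_{t^{-1},1}M$ and $F$, via the explicit description of the $V$-filtration of $\rr{FL}$ at $v=\infty$ in terms of the $V$-filtration of $M$ at $t=\infty$ \cite[\S6]{SabbahFourierLaplace10}.

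\textbf{Main obstacle.} The main obstacle is Step 3: controlling the unipotent part. In the non-exponential setting, $\phi$ versus $\psi$ at $\infty$ and the constant subobjects produce the failure of Sabbah--Yu's formula for $\lambda=1$. We must verify that $\pi_*M=0$ precisely removes these defects, with no Hodge index shift for $\lambda=1$.
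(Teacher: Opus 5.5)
There is a genuine gap in the $\lambda=1$ case, which is the only new content of the statement. Your main route for Step~3 rests on the displayed equality $\dim\gr^{p}_{F_{\rr{irr}}}\Xi_{\rr{dR}}M=\dim\gr^{p+\beta}_{F_{\rr{irr}}}\Xi_{\rr{dR}}M_\beta$. Once the $\lambda\neq1$ case is applied to $M_\beta$, this equality is equivalent to what you want to prove, so it cannot serve as an input. The justification you offer for it also does not exist. The parameter $\beta$ must be rational for $\cc{K}_\beta$ to underlie a Hodge module, so there is no family in $\beta$ along which to use semicontinuity or rigidity, and semicontinuity would only give inequalities anyway. Moreover, $M\otimes j_*\cc{K}_\beta$ is not a modification of $M$ local at $\infty$. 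It acquires new vanishing cycles at $t=0$, so $\rr{FL}(M_\beta)$ has larger rank and new irregular Hodge jumps. In general it is also no longer in $\mathrm{EMHS}$. Nothing relates $\Xi_{\rr{dR}}(M_\beta)$ and its filtration to $\Xi_{\rr{dR}}(M)$. The same objection applies to the twist proposed in Step~2; only your alternative there, running the stationary phase eigenvalue by eigenvalue, is sound.

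The missing idea is where $\pi_*M=0$ actually enters. The paper argues in three steps.
\begin{enumerate}
\item From the definition of $F_{\rr{irr}}$ and \cite[(1.3)]{SabbahFourierLaplace10}, after multiplying by $v^{-[\gamma]}$, one gets $\dim\gr^{\alpha+p}_{F_{\rr{irr}}}\Xi_{\rr{dR}}M=\dim\gr^{p}_{G_{(F)}}\psi_{v,\lambda}G$. Here $V^\bullet$ is the Kashiwara--Malgrange filtration at $v=0$. Your Step~1 and your alternative route place it at $u=0$, i.e.\ $v=\infty$, where $G$ is irregular and $G^0$ is only a lattice.
\item Sabbah's inverse stationary phase \cite[Prop.~4.1(iv)]{SabbahMonodromyInfinity06} is applied to the pure pieces $\gr^{W^{\rr{MHM}}}_kM$ and assembled by induction on $k$. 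It matches the Hodge numbers of $(\psi_{t\inv,\lambda}M,F)$ with those of $(\psi_{v,\lambda}G,G_{(F)})$ for $\lambda\neq1$, but with those of $(\phi_{v,1}G,G_{(F)})$ for $\lambda=1$. This $\psi$ versus $\phi$ mismatch at $v=0$ is exactly why the naive formula fails at $\lambda=1$.
\item Since $M\in\mathrm{EMHS}$, Katz's identity $\rr{FL}(M)=\rr{FL}(\Pi M)=j_+j^+\rr{FL}(M)$ shows that $G$ is localized, so $v$ acts invertibly. Hence $\phi_{v,1}G=\psi_{v,1}G$ together with their $G_{(F)}$-filtrations.
\end{enumerate}
Your alternative route mentions the localized transform, but it never isolates the $\phi_{v,1}$ versus $\psi_{v,1}$ discrepancy or explains how localization removes it. That is the whole substance of the $\lambda=1$ case.
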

  This formula is crucial for the proof of \cref{theorem:main} and \cref{intro::non-degenerate}. 

\subsection{Proof of the stationary phase formula}
Let $M$ be an exponential mixed Hodge structure, which is a mixed Hodge module on $\mathbb{A}^1_t$ with vanishing cohomologies. By abuse of notation, we denote the underlying filtered $\scr{D}$-module of  $M$ by $\left(M, F^{\bullet} M\right)$, which is a filtered regular holonomic $\mathbb{C}[t]\left\langle\partial_{t}\right\rangle$-module. Let $\cc{M}$ be the $\scr{D}_{\bb{P}^1}$-module $j_+M$ where $j\colon \bb{A}^1_t\to \bb{P}^1$ the inclusion. Then $M=\Gamma(\bb{P}^1,\cc{M})=\Gamma(\bb{A}^1,\cc{M}|_{\bb{A}^1})$. 

We denote by $V^{\bullet} \mathcal{M}$ the $V$-filtration of $\mathcal{M}$ with respect to $t\inv$. For $\alpha \in \mathbb{R}$ and $\lambda=\exp (-2 \pi \mathrm{i} \alpha)$, we set $\psi_{t\inv, \lambda} \mathcal{M}=\operatorname{gr}_{V}^{\alpha} \mathcal{M}$. The Hodge filtration $F^{\bullet} M$ extends naturally to $V^{\alpha} \mathcal{M}$.
In such a way, $\left(\mathcal{M}, F^{\bullet} \mathcal{M}\right)$ is strictly specializable at $t\inv=0$. The space $\psi_{t\inv, \lambda} \mathcal{M}$ comes equipped with the induced filtration given by $F^{\bullet} \psi_{t\inv, \lambda} \mathcal{M}=F^{\bullet} \operatorname{gr}_{V}^{\alpha} \mathcal{M}$. 

 On the other hand, let $G$ be the (localized) Fourier--Laplace transform of $M$, and $V^{*} G$ be the $V$-filtration of $G$ with respect to the function $v$. We set similarly $\psi_{v, \lambda} G=\operatorname{gr}_{V}^{\alpha} G$, and the filtration $G_{(F)}^{\bullet}$ induces on it the filtration $G_{(F)}^{\bullet} \psi_{v, \lambda} G$.

\begin{lem}\label{lem:eq:(6)}
    Notation as above, we have 
    \[
        \dim \rr{gr}^p_{G_{(F)}}  \psi_{v, \lambda}  G =  \dim \rr{gr}^p_{F}  \psi_{t^{\prime}, \lambda} \cc{M}  
    \]
    for any $p$ and any $\lambda=\exp(-2\pi i \alpha)$ with $\alpha\in [0,1)$.
\end{lem}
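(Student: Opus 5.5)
The plan is to compare both sides through an explicit identification of the two nearby-cycle spaces, and then check that this identification matches the filtrations up to the expected shift. Here \(t'\) denotes \(t^{-1}\), the coordinate at \(\infty\) on \(\mathbb{P}^1\).

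\textbf{Step 1: the isomorphism of vector spaces.} Under \(\mathrm{FL}\) we have \(v=-\partial_t\) and \(\partial_v=t\). Hence \(v\partial_v=-\partial_t t=-(t\partial_t+1)\), while \(t'\partial_{t'}=-t\partial_t\). So the Euler operators at \(t'=0\) and at \(v=0\) agree up to a shift by an integer.

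\begin{enumerate}
\item The formal stationary phase formula (Bloch--Esnault, Garc\'ia L\'opez, Sabbah) says the formal germ \(\widehat{G}_0\) is regular. It is obtained by the local Fourier transform \(\mathcal{F}^{(\infty,0)}\) from the moderate part of \(\widehat{\mathcal{M}}_\infty\).
\item This transform sends the regular summand of \(\widehat{\mathcal{M}}_\infty\) with monodromy eigenvalue \(\lambda\) to a regular summand at \(v=0\) with the same eigenvalue and the same Jordan structure.
\item For \(\lambda\neq 1\) this gives \(\psi_{t',\lambda}\mathcal{M}\cong\psi_{v,\lambda}G\) directly.
\item For \(\lambda=1\) the comparison can fail for a general regular holonomic module, because of constant summands. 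This is where I use that \(M\) is an exponential mixed Hodge structure: \(\pi_*M=0\) and \(\mathrm{FL}(M)=j_+j^+\mathrm{FL}(M)\). This forces \(G\) to be localized at \(v=0\), so no \(\delta\)-type or constant contributions appear, and the unipotent parts still correspond.
\end{enumerate}

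Concretely, the map sends the class of \(m\in V^\alpha\mathcal{M}\) to the class of \(m\), viewed in \(G\), in \(\mathrm{gr}^{\alpha}_V G\). The Euler operator computation above shows this map respects the \(V\)-indices.

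\textbf{Step 2: matching the filtrations.} Recall \(G^0_{(F)}=\sum_p u^p\,\mathrm{FL}(F^{-p}M)\) with \(u=v^{-1}\). I will show that the isomorphism of Step 1 carries \(F^p\,\mathrm{gr}^\alpha_V\mathcal{M}\) onto \(G^p_{(F)}\,\mathrm{gr}^\alpha_V G\) for \(\alpha\in[0,1)\); this immediately gives the equality of dimensions.
\begin{itemize}
\item \emph{Inclusion.} Elements of \(F^p\mathcal{M}\cap V^\alpha\mathcal{M}\) near \(\infty\) are handled with the strict specializability relations \(t'F^pV^\alpha=F^pV^{\alpha+1}\) and \(\partial_{t'}F^p\mathrm{gr}_V^\alpha\subset F^{p-1}\mathrm{gr}^{\alpha-1}_V\). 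Together with the formula for \(G^0_{(F)}\), they show that \(u^{p}\,\mathrm{FL}(F^{-p}M)\cap V^\alpha G\) surjects onto the image.
\item \emph{Equality.} Rather than prove equality directly, I would use exactness. Both sides are functors of \(M\) that are exact on mixed Hodge modules: the functor \(\psi_{t',\lambda}\) on filtered modules by strictness, and \(M\mapsto(\psi_{v,\lambda}G,G_{(F)}^\bullet)\) by the strictness of the Brieskorn lattice filtration (Sabbah, Fourier--Laplace II). Both are also additive in the weight filtration. So it suffices to prove equality of dimensions for pure, and then simple, objects of \(\mathrm{EMHS}\).
\item For simple objects with \(\lambda\neq 1\), this is Sabbah--Yu's formula \cite[(6)]{SabbahYuIrregularHodge19}, which rests on Sabbah's computation for polarized variations.
\end{itemize}

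\textbf{Main obstacle: \(\alpha=0\).} The hard case is \(\lambda=1\), which Sabbah--Yu exclude. First, I would try to check the filtered isomorphism by hand on the unipotent part, using the localization \(G=\mathbb{C}[v,v^{-1}]\otimes G\) to control \(V^0G\cap G^0\). Failing that, I would reduce to the non-unipotent case by a twist. Tensor \(M\) by a rank-one Kummer module \(K_\beta\) with monodromy \(\exp(-2\pi\mathrm{i}\beta)\) at \(\infty\), for small generic \(\beta\), and apply \(\Pi\). Then compare the jumps of \(F\) on \(\psi_{t',1}M\) with those on \(\psi_{t',\exp(-2\pi\mathrm{i}\beta)}\Pi(M\otimes K_\beta)\), and compare the jumps of \(G_{(F)}\) similarly, letting \(\beta\to 0\). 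The delicate point is controlling how the Hodge filtration of \(\Pi(M\otimes K_\beta)\) at \(\infty\) depends on \(\beta\). I expect this to need the explicit description of \(\Pi\) as \(\mathrm{sum}_*(j_!\mathbb{Q}^{\mathrm{H}}_{\mathbb{G}_m}\boxtimes -)\) and Saito's compatibility of \(\psi\) with proper pushforward.
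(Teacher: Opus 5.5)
\textbf{There is a genuine gap: the case $\lambda=1$ is not proved, and your d\'evissage cannot prove it.} For $\lambda\neq 1$ you use the same input as the paper, namely Sabbah's filtered inverse stationary phase (in the form of Sabbah--Yu's (6)) plus d\'evissage along the weight filtration. But $\lambda=1$ is the main new content of the lemma, and you leave it open.

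The reason the reduction fails is that the $\psi_{v,1}$-form of the identity is false for the pieces you reduce to. The weight-graded pieces, or simple constituents, of $M$ in $\mathrm{MHM}(\mathbb{A}^1)$ need not have localized Fourier transform; localization comes from $\pi_*M=0$ and is a property of the mixed object $M$ only. For instance, $\Pi(\delta_0)\simeq j_!\mathbb{Q}^{\rr{H}}_{\mathbb{G}_m}$ is an extension of the constant module by $\delta_0$.
\begin{itemize}
\item For $\delta_0$: $\psi_{t',1}=0$, but $\mathrm{FL}(\delta_0)=\mathcal{O}$ has $\psi_{v,1}=\mathbb{C}$.
\item For $\mathcal{O}_{\mathbb{A}^1}$: $\psi_{t',1}=\mathbb{C}$, but $\mathrm{FL}(\mathcal{O}_{\mathbb{A}^1})$ is a $\delta$-module at $v=0$ with $\psi_{v,1}=0$.
\end{itemize}
So the identity fails termwise. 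Its validity for $M$ rests on a cancellation between constituents, which is exactly what must be proved, and reducing to simple pieces cannot see it. If you instead mean simple objects of the abelian category $\mathrm{EMHS}$, these (e.g.\ $\Pi(\delta_0)$) are not pure Hodge modules, so Sabbah's pure-case results do not apply to them.

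\textbf{The missing idea.} Sabbah's result \cite[Prop.~4.1(iv)]{SabbahMonodromyInfinity06}, together with \cite[Lem.~5.20]{SabbahFourierLaplace10}, does cover $\lambda=1$ for every pure $M'$, with no localization hypothesis, once $\psi_{v,1}$ is replaced by vanishing cycles: $R_{G_{(F)}}\phi_{v,1}\mathrm{FL}(M')=R_F\psi_{t',1}\mathcal{M}'$. In the example above, $\phi_{v,1}\mathcal{O}=0$ and $\phi_{v,1}$ of the $\delta$-module is $\mathbb{C}$, matching $\psi_{t',1}$ in both cases. This $\phi$-version is additive along $W^{\rr{MHM}}_\bullet$, so it holds for $M$ itself. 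Only then is localization used: since $v$ is invertible on $G=\mathrm{FL}(M)$, multiplication by $v$ identifies $\phi_{v,1}G$ with $\psi_{v,1}G$ as filtered spaces. This is the step $R_{G_{(F)}}\psi_{v,1}G=R_{G_{(F)}}\phi_{v,1}G$ in the paper, and it gives the $\lambda=1$ case.

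\textbf{The Kummer-twist fallback does not substitute for this.}
\begin{itemize}
\item For $\beta\notin\mathbb{Q}$ the twist leaves the category of rational mixed Hodge modules.
\item For rational $\beta$ there is no limiting process for integer-valued graded dimensions.
\item A twist on the $t$-side becomes a convolution on the $v$-side. Relating the Brieskorn lattice of $\mathrm{FL}\,\Pi(M\otimes K_\beta)$ to that of $\mathrm{FL}(M)$ is therefore as hard as the original question.
\end{itemize}
Step~1 and the ``inclusion'' part of Step~2 are also unnecessary, since the filtered Rees-module identity already contains the vector-space isomorphism. Moreover, the map ``class of $m$ to class of $m$'' from $\mathrm{gr}^\alpha_V\mathcal{M}$ at $t'=0$ to $\mathrm{gr}^\alpha_V G$ at $v=0$ is not well defined as stated.
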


\begin{proof}
    
For a pure Hodge module $M'$, let $\mathcal{M}'=j_*M'$ where $j\colon \bb{A}^1\hookrightarrow \mathbb{P}^1$. For the associated Rees module $R_F\mathcal{M}'$, we apply the ``inverse stationary phase formula'' of \cite[Prop.\,4.1(iv)]{SabbahMonodromyInfinity06} together with \cite[Lem. 5.20(*)$_\infty$]{SabbahFourierLaplace10}, we have 
    \begin{equation*} \begin{cases}
        R_{G_{(F)}} \psi_{v, \lambda} \rr{FL}(M') = R_F\psi_{t\inv, \lambda} \mathcal{M}' & \lambda \neq 1;\\
         R_{G_{(F)}}  \phi_{v, \lambda}  \rr{FL}(M') =  R_F \psi_{t\inv, \lambda} \mathcal{M}' &  \lambda = 1.\\
    \end{cases}
    \end{equation*} 
We take the associated graded pieces of the above identities and  deduce    
    \begin{equation}\label{eq:(6)}\begin{cases}
        \dim \rr{gr}^p_{G_{(F)}} \psi_{v, \lambda} \rr{FL}(M') = \dim \rr{gr}^p_F\psi_{t\inv, \lambda} \mathcal{M}' & \lambda \neq 1;\\
         \dim \rr{gr}^p_{G_{(F)}}  \phi_{v, \lambda}  \rr{FL}(M') =  \dim \rr{gr}^p_F \psi_{t\inv, \lambda} \mathcal{M}' &  \lambda = 1.\\
    \end{cases}
    \end{equation} 

    For our exponential mixed Hodge structure $M$, it is a mixed Hodge module on $\mathbb{A}^1$ with weight filtration $W^{\rr{MHM}}_\bullet$. For each $k$, we have an exact sequence
        $$0\to W^{\rr{MHM}}_{k-1}M \to W^{\rr{MHM}}_kM \to \rr{gr}_k^{W^{\rr{MHM}}}M \to 0.$$
    By induction on $k$ and applying \eqref{eq:(6)} to $\rr{gr}_k^WM$, we have 
        \begin{equation}
        \begin{cases}
        \dim \rr{gr}^p_{G_{(F)}} \psi_{v, \lambda} G =  \dim \rr{gr}^p_F\psi_{t\inv, \lambda} \mathcal{M}& \lambda \neq 1;\\
         \dim \rr{gr}^p_{G_{(F)}}  \phi_{v, \lambda}  G =  \dim \rr{gr}^p_F \psi_{t\inv, \lambda} \mathcal{M} &  \lambda = 1.\\
    \end{cases}
    \end{equation} 
    
    A priori, when $\lambda \neq 1$, the above formulas give exactly what we want. When $\lambda=1$, since $v$ is invertible on $G$, one has $\psi_{v,1}G=\phi_{v,1}G$ as well as $R_{G_{(F)}}\psi_{v,1}G=R_{G_{(F)}}\phi_{v,1}G$. Therefore, we have 
    $$
        \dim \rr{gr}^p_{G_{(F)}}  \psi_{v, 1}  G
         = \dim \rr{gr}^p_{G_{(F)}}  \phi_{v, 1}  G 
         = \dim \rr{gr}^p_F \psi_{t\inv, 1} \mathcal{M}
    $$
    for any $p$.
\end{proof}

Now we prove \cref{prop:stationary-phase}. 
\begin{proof}
We follow the argument in \cite[\S5]{SabbahYuIrregularHodge19}. By the formula of irregular Hodge numbers \eqref{eq:irregular-Hodge-filtration} and
\cite[(1.3)]{SabbahFourierLaplace10} (by replacing there $z$ with $u=v\inv$ and taking $z_{0}=1$ and $0$), we conclude that 
$$
\dim \rr{gr}_{F_{\text {irr }}}^{\gamma} \Xi_{\rr{dR}}(M)=\dim \left(\left(V^{\gamma} \cap G^{0}\right) /\left[\left(V^{>\gamma} \cap G^{0}\right)+\left(V^{\gamma} \cap G^{1}\right)\right]\right) .
$$

Let us set $\alpha=\{\gamma\}:=\gamma-[\gamma] \in[0,1)$, so that $v^{-[\gamma]}\left(V^{\gamma} \cap G^{0}\right)=V^{\alpha} \cap G^{[\gamma]}$. Then, for $p \in \mathbb{Z}$ and $\alpha \in[0,1)$, we find

\begin{equation}
    \dim \operatorname{gr}_{F_{\text {irr }}}^{\alpha+p} \Xi_{\rr{dR}}(M)
    =\dim \left(\left(V^{\alpha} \cap G^{p}\right) /\left[\left(V^{>\alpha} \cap G^{p}\right)+\left(V^{\alpha} \cap G^{1+p}\right)\right]\right) 
    =\dim \operatorname{gr}_{G_{(F)}}^{p} \psi_{v, \lambda} G .
\end{equation}
Together with \cref{lem:eq:(6)}, we obtain:
\begin{equation}\label{eq:stationary-phase-modified}
    \dim \rr{gr}^{\alpha+p}_{F_{\rr{irr}}}\Xi_{\rr{dR}}(M)=\dim \rr{gr}^p_F\psi_{t\inv,\lambda}M
\end{equation}
for $p \in \mathbb{Z}$ and $\alpha \in[0,1)$.
\end{proof}

Applying \cref{prop:stationary-phase} to the exponential mixed Hodge structure $\mathrm{H}^i(U,f)$ defined in \eqref{eq:EMHS-functions}, we deduce the following corollary.

\begin{cor}\label{cor:stationary-phase-functions}
    Let $f$ be a regular function $f$ on a smooth quasi-projective variety $U$. For and any integers $i$ and $p$, and any $\alpha\in [0,1)$ we have
        $$\dim \rr{gr}^{\alpha+p}_{F_{\rr{irr}}}\mathrm{H}^i_{\mathrm{dR}}(U,f)= \dim
         \rr{gr}^p_F\psi_{t\inv,\lambda}\Pi\,\mathrm{R}^i f_*\mathbb{Q}^{\rr{H}},$$
    where $\lambda=\exp(-2\pi i \alpha)$.
\end{cor}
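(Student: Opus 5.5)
The plan is to apply \cref{prop:stationary-phase} directly to the exponential mixed Hodge structure $M=\mathrm{H}^i(U,f)$ of \eqref{eq:EMHS-functions}, and then identify both sides with the objects in the statement. We need two bookkeeping steps. The first identifies the de Rham fiber $\Xi_{\rr{dR}}\mathrm{H}^i(U,f)$, together with its irregular Hodge filtration, with $\mathrm{H}^i_{\rr{dR}}(U,f)$ and the irregular Hodge filtration recalled in the introduction. The second matches the object $\Pi(\mathcal{H}^{i-n}f_*\mathbb{Q}_U^{\rr{H}})$ with the notation $\Pi\,\mathrm{R}^if_*\mathbb{Q}^{\rr{H}}$ used in the statement.

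For the first identification, we use the fact already recorded at the end of \cref{section:exp-mhm}. The de Rham fiber of $\mathrm{H}^i(U,f)$ is isomorphic to $\mathrm{H}^i_{\rr{dR}}(U,f)$. By Esnault--Sabbah--Yu, the filtration \eqref{eq:irregular-Hodge-filtration} on the de Rham fiber corresponds to the Kontsevich/Yu irregular Hodge filtration on $\mathrm{H}^i_{\rr{dR}}(U,f)$. Concretely, $\Xi_{\rr{dR}}$ is computed as $\mathrm{H}^1_{\rr{dR}}(\mathbb{A}^1,M\otimes\mathcal{E}^t)$. Since $\Pi$ only removes constant pieces, and these have vanishing twisted cohomology, we get $\mathrm{H}^1_{\rr{dR}}(\mathbb{A}^1,\Pi(N)\otimes\mathcal{E}^t)=\mathrm{H}^1_{\rr{dR}}(\mathbb{A}^1,N\otimes\mathcal{E}^t)$ for $N=\mathcal{H}^{i-n}f_*\mathbb{Q}_U^{\rr{H}}$. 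A Leray/projection-formula argument for $f_*$ against $\mathcal{E}^t$ then gives $\mathrm{H}^i_{\rr{dR}}(U,f)$. In this spectral sequence, the twisted cohomology of a holonomic module on $\mathbb{A}^1$ is concentrated in degree $1$, so only one term contributes in each degree. The compatibility of filtrations is the cited theorem, so no further work is needed there.

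For the second identification, note that $\mathcal{H}^{i-n}f_*\mathbb{Q}_U^{\rr{H}}$ is the (perverse-degree) $\mathrm{R}^if_*$ under the shift convention $\mathbb{Q}^{\rr{H}}_U\leftrightarrow\mathbb{Q}_U[n]$. Substituting into \cref{prop:stationary-phase} with $\lambda=\exp(-2\pi\mathrm{i}\alpha)$ gives
\[
\dim\rr{gr}^{\alpha+p}_{F_{\rr{irr}}}\mathrm{H}^i_{\rr{dR}}(U,f)=\dim\rr{gr}^p_F\psi_{t\inv,\lambda}\Pi\,\mathrm{R}^if_*\mathbb{Q}^{\rr{H}},
\]
as claimed.

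The only step with content is the compatibility of the irregular Hodge filtration on the de Rham fiber with that on $\mathrm{H}^i_{\rr{dR}}(U,f)$, including the fact that $\Pi$ does not alter it. I expect this to be handled by citing \cite{EsnaultEtAl$E_1$degenerationirregular17} together with the $E_1$-degeneration there. If more detail is needed, I would argue as follows. The cone of $N\to\Pi N$ is built from constant modules. Constant modules have $G=\rr{FL}$ supported at $v=0$, so they are killed by localization, which leaves $G$ and hence the Brieskorn lattice unchanged.
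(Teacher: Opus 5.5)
Your proposal is correct and follows the paper's proof, which consists exactly of applying \cref{prop:stationary-phase} to $M=\mathrm{H}^i(U,f)=\Pi(\mathcal{H}^{i-n}f_*\mathbb{Q}_U^{\mathrm{H}})$ and invoking the Esnault--Sabbah--Yu identification of $\Xi_{\mathrm{dR}}\mathrm{H}^i(U,f)$, with its irregular Hodge filtration, with $\mathrm{H}^i_{\mathrm{dR}}(U,f)$. Your supplementary sketches have two small imprecisions: the degree-concentration claim needs \emph{regular} holonomic modules (it fails for $\mathcal{E}^{-t}$), and the invariance of the Brieskorn lattice under $N\to\Pi N$ follows from the strictness of this morphism with respect to $F$, not merely from the localized $G$ being unchanged.
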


\section{Classical, irregular, and Landau--Ginzburg Hodge numbers}\label{section:KKP-numbers}

\subsection{Classical and irregular Hodge numbers}\label{subsection:classical-numbers}
We first give a proof of \cref{theorem:main} using \cref{cor:stationary-phase-functions}.
\begin{prop}\label{prop:distinguished-triangle}
    There is a distinguished triangle 
    $$
         f_*\bb{Q}_{ U}^{\rr{H}} \to \Pi\,    f_*\bb{Q}_{ U}^{\rr{H}} \to \rr{R}\Gamma( U,\bb{Q}^{\rr{H}}_{  U})\otimes_{\bb{Q}}\bb{Q}_{\bb{A}^1}^{\rr{H}}\xrightarrow{+1}
    $$ 
    in $\rr{D}^b(\rr{MHM}(\bb{A}^1))$, where $\Pi$ is the functor defined in \eqref{eq:Pi}.
\end{prop}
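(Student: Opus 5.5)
The plan is to obtain the triangle by applying $\operatorname{sum}_*(-\boxtimes f_*\bb{Q}^{\rr{H}}_U)$ to the standard localization triangle on $\bb{A}^1$ for the open inclusion $j\colon\bb{G}_m\hookrightarrow\bb{A}^1$ and the closed point $i_0\colon\{0\}\hookrightarrow\bb{A}^1$:
\[
    j_!\bb{Q}^{\rr{H}}_{\bb{G}_m}\to \bb{Q}^{\rr{H}}_{\bb{A}^1}\to i_{0*}i_0^*\bb{Q}^{\rr{H}}_{\bb{A}^1}\xrightarrow{+1}
\]
(up to the usual shift normalizations of $\bb{Q}^{\rr{H}}$). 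Set $N:=f_*\bb{Q}^{\rr{H}}_U\in \rr{D}^b\rr{MHM}(\bb{A}^1)$. The functor $K\mapsto \operatorname{sum}_*(K\boxtimes N)$ is triangulated, so we get a triangle
\[
    \operatorname{sum}_*(j_!\bb{Q}^{\rr{H}}_{\bb{G}_m}\boxtimes N)\to \operatorname{sum}_*(\bb{Q}^{\rr{H}}_{\bb{A}^1}\boxtimes N)\to \operatorname{sum}_*(i_{0*}\bb{Q}^{\rr{H}}\boxtimes N)\xrightarrow{+1}.
\]
The third term is $N$ itself, since $\operatorname{sum}\circ(i_0\times\mathrm{id})=\mathrm{id}$. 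The first term is $\Pi N$, where $\Pi$ is extended to the derived category by the same formula \eqref{eq:Pi}; this agrees with applying $\Pi$ termwise because $\Pi$ is exact.

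The key computation is the middle term. Change coordinates on $\bb{A}^1\times\bb{A}^1$ by $(s,t)\mapsto(s+t,t)$. Then $\operatorname{sum}$ becomes the first projection, and $\bb{Q}^{\rr{H}}_{\bb{A}^1}\boxtimes N$ becomes $\pr_1^*\bb{Q}^{\rr{H}}_{\bb{A}^1}\otimes \pr_2^*N$ up to shift. By base change and the projection formula, $\operatorname{sum}_*(\bb{Q}^{\rr{H}}\boxtimes N)\cong \bb{Q}^{\rr{H}}_{\bb{A}^1}\otimes_{\bb{Q}} \pi_*N$. Since $\pi_*N=\pi_*f_*\bb{Q}^{\rr{H}}_U=\rr{R}\Gamma(U,\bb{Q}^{\rr{H}}_U)$, the middle term is the constant complex $\rr{R}\Gamma(U,\bb{Q}^{\rr{H}}_U)\otimes\bb{Q}^{\rr{H}}_{\bb{A}^1}$, with the shift bookkeeping to be checked.

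We are left with a triangle $\Pi N\to \rr{R}\Gamma(U)\otimes\bb{Q}^{\rr{H}}_{\bb{A}^1}\to N\xrightarrow{+1}$. Rotating it gives
\[
    N\to \Pi N[1]\to \rr{R}\Gamma(U)\otimes\bb{Q}^{\rr{H}}_{\bb{A}^1}[1]\xrightarrow{+1},
\]
which matches the statement except for shifts. The main obstacle is precisely this shift convention. With $\bb{Q}^{\rr{H}}$ denoting the perverse-normalized constant module, the map $\bb{Q}^{\rr{H}}_{\bb{A}^1}\to i_{0*}i_0^*\bb{Q}^{\rr{H}}_{\bb{A}^1}$ carries a shift $[-1]$, and so does the external product. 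One has to track these, and possibly the normalization of $\Pi$ (whether $j_!\bb{Q}^{\rr{H}}_{\bb{G}_m}$ is perverse-shifted), so that the shifts cancel and produce exactly $N\to\Pi N\to \rr{R}\Gamma(U,\bb{Q}^{\rr{H}}_U)\otimes\bb{Q}^{\rr{H}}_{\bb{A}^1}$.

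I would fix the conventions by testing on $U=\bb{A}^1$, $f=\mathrm{id}$. There $\Pi N=0$, so the triangle must read $N\to 0\to N[1]$. That forces the correct identification of the constant term as $\rr{R}\Gamma(U,\bb{Q}^{\rr{H}}_U)\otimes\bb{Q}^{\rr{H}}_{\bb{A}^1}$ with the paper's normalization $\rr{H}^i(X,\bb{Q}^{\rr{H}}_X)=\rr{H}^{i+\dim X}(X,\bb{Q})$. Finally, I would check that the first arrow $N\to\Pi N$ is the natural one induced by $j_!\to$ (the unit), so that it is compatible with the identification used later in \cref{cor:stationary-phase-functions}.
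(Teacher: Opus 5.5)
Your proposal is correct and is essentially the paper's proof. The paper applies $\rr{sum}_*\bigl((f\times\rr{id})_*\bb{Q}^{\rr{H}}_{U\times\bb{A}^1}\otimes -\bigr)$ to the localization triangle for $\bb{A}^1\times\bb{G}_m\subset\bb{A}^1\times\bb{A}^1$, which amounts to your $\rr{sum}_*(-\boxtimes N)$; it identifies the middle term via the shear $(x,y)\mapsto(x,y+f(x))$ and then rotates.

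The shift you left open is resolved by $i_0^*\bb{Q}^{\rr{H}}_{\bb{A}^1}=\bb{Q}^{\rr{H}}_{\mathrm{pt}}[1]$. This makes the third term $N[1]$, not $N$, as in the paper's closed-part computation. The middle term is exactly $\rr{R}\Gamma(U,\bb{Q}^{\rr{H}}_U)\otimes\bb{Q}^{\rr{H}}_{\bb{A}^1}$, because Saito's $\boxtimes$ carries no shift. Rotating $\Pi N\to\rr{R}\Gamma(U,\bb{Q}^{\rr{H}}_U)\otimes\bb{Q}^{\rr{H}}_{\bb{A}^1}\to N[1]\xrightarrow{+1}$ then gives the statement exactly, which agrees with your test case $U=\bb{A}^1$, $f=\rr{id}$.
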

\begin{proof}
 Consider the following diagram
    $$\begin{tikzcd}
          U \times \bb{G}_m \ar[r, hook, "j"]\ar[d,"  f\times \rr{id}"] 
        &   U\times \bb{A}^1 \ar[r,"\rr{pr}_1"] \ar[d,"  f\times \rr{id}"]
        &   U \ar[d, "  f"] \\
        \bb{A}^1 \times \bb{G}_m \ar[r,hook, "j"]
        & \bb{A}^1 \times \bb{A}^1 \ar[r,"\rr{pr}_1"]
        & \bb{A}^1
    \end{tikzcd}$$
    where $\rr{pr}_1$ is the projection to the first factor.
    By the definition of $\Pi$, we have
    \begin{equation}\label{eq:open-part}
        \begin{split}
            \Pi\,    f_*\bb{Q}_{  U}^{\rr{H}}
        &= \rr{sum}_*(  f_*\bb{Q}_{ U}^{\rr{H}} \boxtimes j_!\bb{Q}^{\rr{H}}_{  \bb{G}_{m}}) \\
        &= \rr{sum}_*(\pr_1^* f_*\bb{Q}_{ U}^{\rr{H}} \otimes j_!\bb{Q}^{\rr{H}}_{\bb{A}^1\times \bb{G}_{m}}) \\
        &= \rr{sum}_*( (f \times \rr{id})_*\bb{Q}_{ U\times \bb{G}_m}^{\rr{H}} \otimes j_!\bb{Q}^{\rr{H}}_{\bb{A}^1\times \bb{G}_{m}}) 
        \end{split}
    \end{equation}
    where we applied the smooth base change formula in the last isomorphism.

    To the triple $(\bb{A}^1\times \bb{A}^1, \bb{A}^1\times \bb{G}_m, \bb{A}^1\times 0)$,
    we have the distinguished triangle
    \begin{equation}\label{eq:triangle-j-i}
        j_!\bb{Q}^{\rr{H}}_{ \bb{A}^1\times \bb{G}_m} \to \bb{Q}^{\rr{H}}_{ \bb{A}^1\times \bb{A}^1} \to i_*i^*\bb{Q}^{\rr{H}}_{ \bb{A}^1\times \bb{A}^1} \xrightarrow{+1}.
    \end{equation}
    Notice that 
    \begin{equation}\label{eq:whole-part}
        \begin{split}
        &\rr{sum}_*((  f \times \rr{id})_*\bb{Q}_{  U\times \bb{G}_m}^{\rr{H}}\otimes  \bb{Q}^{\rr{H}}_{ \bb{A}^1\times \bb{A}^1})\\
        =&\rr{sum}_*(  f \times \rr{id})_*\bb{Q}_{  U\times \bb{A}^1}^{\rr{H}} \\
        = &\pr_{2*} \iota_*\bb{Q}_{  U\times \bb{A}^1}^{\rr{H}}  
        \simeq \pr_{2*}  \bb{Q}_{ U\times \bb{A}^1}^{\rr{H}} 
        \end{split}
    \end{equation}
    where $\iota\colon   U \times \bb{A}^1\to  U\times \bb{A}^1$ the isomorphism defined by $(x,y)\mapsto (x,y+f(x))$ and $\pr_2$ is the projection to the second factor, as illustrated in the following commutative diagram:
    $$\begin{tikzcd}
          U \times \bb{A}^1_y \ar[d, "\iota","\sim"'{anchor=south, rotate=90}]\ar[r," f\times \rr{id}"] 
        & \bb{A}^1_x \times \bb{A}^1_y  \ar[d, "{(x,y+x)}","\sim"'{anchor=south, rotate=90}]\ar[r,"\rr{sum}"]
        & \bb{A}^1 \ar[d, equal] \\
         U \times \bb{A}^1 \ar[rr,bend right,"\rr{pr}_2"]\ar[r,"  f\times \rr{id}"]
        & \bb{A}^1 \times \bb{A}^1 \ar[r,"\rr{pr}_2"]
        & \bb{A}^1.
    \end{tikzcd}$$

    Similarly, we have
    \begin{equation}\label{eq:closed-part}
        \begin{split}
        &\rr{sum}_*((  f \times \rr{id})_*\bb{Q}_{  U\times \bb{A}^1}^{\rr{H}}\otimes  i_*i^*\bb{Q}^{\rr{H}}_{ \bb{A}^1\times  \bb{A}^1})\\
        =&(\rr{sum}_* i_*)(i^*(  f \times \rr{id})_*\bb{Q}_{ U\times  \bb{A}^1}^{\rr{H}}\otimes   i^*\bb{Q}^{\rr{H}}_{ \bb{A}^1\times  \bb{A}^1}) \\ 
        =&\rr{id}_*i^*((  f \times \rr{id})_*\bb{Q}_{ U\times  \bb{A}^1}^{\rr{H}}\otimes    \bb{Q}^{\rr{H}}_{ \bb{A}^1\times  \bb{A}^1}) \\ 
        =& i^* ( (f \times \rr{id})_*\rr{pr}_{1}^*\bb{Q}_{ U}^{\rr{H}} [1] )\\ 
        =& ( i^* \rr{pr}_1^*)f_*\bb{Q}_{ U}^{\rr{H}}[1]
        =f_*\bb{Q}_{  U}^{\rr{H}}[1],\\ 
        \end{split}
    \end{equation}
    where we used the projection formula in the first isomorphism, the smooth base change formula in the forth isomorphism, and denoted by $i$ the inclusion morphisms below
    $$\begin{tikzcd}
         {U}=  {U}\times 0 \ar[d,"  f"] \ar[r,"i"]&   U\times \bb{A}^1 \ar[d,"  f\times \rr{id}"] & \\
        \bb{A}^1=\bb{A}^1\times 0 \ar[r,"i"] \ar[rd,"\rr{id}"]  & \bb{A}^1\times \bb{A}^1 \ar[d,"\rr{sum}"]&  \\
         0 \ar[r,"i"]&\bb{A}^1.&
    \end{tikzcd}$$
    Combining \eqref{eq:open-part}, \eqref{eq:whole-part}, \eqref{eq:closed-part} and applying $\rr{sum}_*( (  f\times \rr{id})_*\bb{Q}_{  U\times \bb{A}^1}^{\rr{H}} \otimes -)$ to the distinguished triangle \eqref{eq:triangle-j-i}, we have 
    $$
        \Pi   f_*\bb{Q}_{  U}^{\rr{H}} 
        \to  \pr_{2*}  \bb{Q}_{  U\times \bb{A}^1}^{\rr{H}}
        \to   f_*\bb{Q}_{  U}^{\rr{H}}[1] \xrightarrow{+1}.
    $$
    To conclude the proof, it suffices to notice that 
    $$
        \pr_{2*}  \bb{Q}_{  U\times \bb{A}^1}^{\rr{H}}
        \simeq  \rr{R}\Gamma(  U,\bb{Q}^{\rr{H}}_{  U})\otimes_{\bb{Q}}\bb{Q}_{\bb{A}^1}^{\rr{H}}
    $$
    and shift the triangle.
\end{proof}

\begin{thm}\label{thm:KKP-numbers}
    For  any $\alpha \in [0,1) $ and
    \(p \in \mathbb{Z}\), we have
    \[
        h^{p+\alpha}_{\rr{irr}}(U,f,i) = \dim\rr{gr}_{F_{\rr{lim}}}^p\mathrm{H}^{i}(U^{\mathrm{an}}, f^{-1}(t)^{\mathrm{an}})_\lambda,
    \]
    where $\lambda=\exp(-2\pi\mathrm{i}\alpha)$. 
\end{thm}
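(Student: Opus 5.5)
The plan is to combine \cref{cor:stationary-phase-functions} with \cref{prop:distinguished-triangle}. By \cref{cor:stationary-phase-functions}, $h^{p+\alpha}_{\rr{irr}}(U,f,i)=\dim\rr{gr}^p_F\psi_{t\inv,\lambda}\Pi\,\mathrm{R}^if_*\bb{Q}^{\rr{H}}$. So the whole task is to identify the nearby cycles at infinity of $\Pi\,\mathrm{R}^if_*\bb{Q}^{\rr{H}}=\mathrm{H}^i(U,f)$, as a filtered vector space with monodromy, with the limiting mixed Hodge structure on $\mathrm{H}^i(U^{\rr{an}},f^{-1}(t)^{\rr{an}})$ at $t\to\infty$.

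First, I would use \cref{prop:distinguished-triangle} to describe $\Pi f_*\bb{Q}^{\rr{H}}_U$ near infinity. Let $V\subset\bb{A}^1$ be a punctured neighbourhood of $\infty$ on which all cohomology sheaves of $f_*\bb{Q}_U$ are local systems. On $V$, the triangle says that $\Pi f_*\bb{Q}^{\rr{H}}_U$ is the cone of the natural map $\rr{R}\Gamma(U,\bb{Q}^{\rr{H}})\otimes\bb{Q}^{\rr{H}}_{\bb{A}^1}[-1]\to f_*\bb{Q}^{\rr{H}}_U[-1]$, up to shifts. I would check that this map is adjunction restriction to fibres. Then the stalk at $t$ of the cone is $\rr{R}\Gamma(U,f^{-1}(t))$, and the relative cohomology sheaf has stalk $\mathrm{H}^i(U,f^{-1}(t))$ as an admissible variation of mixed Hodge structure on $V$. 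Because $\Pi$ is exact on $\rr{MHM}(\bb{A}^1)$, with $\Pi(\mathcal{H}^k)=\mathcal{H}^k\Pi$, the cohomology module $\Pi\,\mathrm{R}^if_*\bb{Q}^{\rr{H}}$ restricted to $V$ is exactly this relative variation $\mathcal{H}^i$, up to the perverse shift by $1$. I also need the indexing to match: the perverse degree $i-n$ convention in \eqref{eq:EMHS-functions}, together with the Tate twist and shift coming from $j_!\bb{Q}^{\rr{H}}_{\bb{G}_m}$ in $\Pi$, should produce no net Hodge shift. I would confirm this on the example $U=\bb{G}_m$, $f=t$, or on a point.

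Second, nearby cycles at $\infty$ only depend on the restriction to $V$. By Saito's theory for admissible variations on a punctured disc, $\psi_{t\inv,\lambda}$ of a smooth mixed Hodge module, with its induced $F$, is the $\lambda$-generalized eigenspace of the limiting mixed Hodge structure, so $F^\bullet$ on $\rr{gr}_V^\alpha$ is $F^\bullet_{\rr{lim}}$. Hence
\[\dim\rr{gr}^p_F\psi_{t\inv,\lambda}\Pi\,\mathrm{R}^if_*\bb{Q}^{\rr{H}}=\dim\rr{gr}^p_{F_{\rr{lim}}}\mathrm{H}^i(U^{\rr{an}},f^{-1}(t)^{\rr{an}})_\lambda.\]
This needs the eigenvalue convention $\lambda=\exp(-2\pi\mathrm i\alpha)$ for $\rr{gr}^\alpha_V$ to agree with the monodromy $T$ used in the statement. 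It does, because both use the same $V$-filtration convention as in \cref{prop:stationary-phase}.

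The main obstacle is the bookkeeping in the first step. I must verify that the connecting map in \cref{prop:distinguished-triangle} really is the restriction map, so that the cone computes relative cohomology with its mixed Hodge structure rather than something abstractly isomorphic. I must also verify that no Tate twist $\bb{Q}(-1)$ sneaks in from $j_!\bb{Q}^{\rr{H}}_{\bb{G}_m}$, since a twist would shift $p$. I would try to settle both by tracing the construction on the constant part: the triangle is functorial in $U$, so it suffices to match the maps on stalks via base change to a point $t$.
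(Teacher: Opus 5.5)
Your proposal is correct and is essentially the paper's argument. Like the paper, you combine \cref{cor:stationary-phase-functions} with the triangle of \cref{prop:distinguished-triangle}; the only difference is the order: the paper applies $\psi_{1/t}$ to the triangle and reads off the limiting relative cohomology from the resulting long exact sequence, whereas you first identify $\Pi\,\mathrm{R}^{i-n}f_*\mathbb{Q}^{\mathrm{H}}$ near $\infty$ with the variation $t\mapsto \mathrm{H}^i(U^{\mathrm{an}},f^{-1}(t)^{\mathrm{an}})$ and then take nearby cycles. The checks you flag are exactly what the paper leaves implicit in ``by the definition of relative cohomology'': that the connecting map is restriction to the fibre, and that no Tate twist enters because $i^*$ of the constant module introduces none.
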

\begin{proof}
Taking the nearby cycle functor $\psi_{1/t}$ to the distinguished triangle in  \cref{prop:distinguished-triangle}, we have a distinguished triangle 
\begin{equation}\label{eq:distinguished-triangle-limiting}
    \psi_{1/t} f_*\bb{Q}_{ U}^{\rr{H}} \to \psi_{1/t}\Pi\,   f_*\bb{Q}_{ U}^{\rr{H}} \to \rr{R}\Gamma(U,\bb{Q}^{\rr{H}}_{ U}) \xrightarrow{+1}.
\end{equation}

Notice that   
$
    \psi_{1/t,\lambda}\cc{H}^{i-n} f_*\bb{Q}_{ U}^{\rr{H}} 
$
is supported at $\infty$ and is identified with
$
\rr{H}^{i-1}(  f^{-1}(t)^{\mathrm{an}})_\lambda
$
equipped with the limiting Hodge structure for generic $t\in \bb{C}$. Therefore, by the definition of relative cohomology, we deduce from \eqref{eq:distinguished-triangle-limiting} that
\begin{equation}\label{eq:distinguished-triangle-limiting3}
    \gr_F^p \psi_{1/t,\lambda}\Pi\,\rr{R}^{i-n}f_*\bb{Q}_{ U}^{\rr{H}} 
    =\gr_{F_{\rr{lim}}}^p \rr{H}^i(U^{\mathrm{an}}, f^{-1}(t)^{\mathrm{an}})_\lambda.
\end{equation}

On the other hand, by \cref{cor:stationary-phase-functions}, we have
    $$\dim \rr{gr}^{p+\alpha}_{F_{\rr{irr}}}\mathrm{H}^i_{\mathrm{dR}}(U,f)= \dim
    \rr{gr}^p_F\psi_{t\inv,\lambda }\Pi\,\mathrm{R}^{i-n}f_*\mathbb{Q}^{\rr{H}}$$
where $\lambda=\exp(-2\pi i\alpha)$, which finishes the proof. 
\end{proof}
\subsection{A conjecture of Katzarkov--Kontsevich--Pantev}\label{subsection:KKP-conjecture}
In \cite{KatzarkovEtAlBogomolovTian17}, Katzarkov, Kontsevich, and Pantev introduced certain numerical invariants  $f_{\rr{LG}}^{p,q}$ and $h_{\rr{LG}}^{p,q}$  
for Landau--Ginzburg models. We recall the definitions as follows.

We first recall the definition of the number $f^{p,q}_{\rr{LG}}$. Starting from a regular function $f$ on a smooth quasi-projective variety $U$, we can always find a \textit{good compactification} $(X,g)$, consisting of a smooth projective variety $X$ with  $D=X-U$ being simple normal crossing, and a morphism $g\colon X\to \bb{P}^1$ extending $f$. 

Let  $P=\sum_{i=1}^r e_i D_i$ be the pole divisor of $f$ and for a rational number $\alpha\in [0,1)$, we denote by $\lfloor\alpha P\rfloor$ the divisor $\sum_{i=1}^r \lfloor\alpha e_i\rfloor D_i$. The \textit{Kontsevich complex} $\Omega^{\bullet}_{X}(\log D ,f)(\lfloor\alpha P\rfloor)$ is defined as 
    $$
        \Omega^{\bullet}_{X}(\log D ,f)(\lfloor\alpha P\rfloor) =  \{ \omega \in \Omega^{\bullet}_{X}(\log D)(\lfloor\mu P\rfloor) \mid df\wedge \omega \in \Omega^{\bullet}_{X}(\log D)(\lfloor(\alpha+1) P\rfloor)  \}.
    $$
\begin{defn}\label{defn:f^{p,q}}
    For each $p\in \bb{Z}_{\geq 0}$ and $\alpha\in [0,1)$ we define
        $$f^{p,q-\alpha}_{\rr{LG}}(U,f):=\dim    \mathbb{H}^{p} (X,\Omega^{q}_{X}(\log D,f)(\lfloor\alpha P\rfloor)).$$
\end{defn}

\begin{rmk}\label{rmk:irregular-Hodge-numbers}
    In \cite[Cor. 1.4.5]{EsnaultEtAl$E_1$degenerationirregular17}, the Landau--Ginzburg Hodge numbers $f^{p,q-\alpha}_{\rr{LG}}(U,f)$ are identified with the irregular Hodge numbers $h^{q-\alpha}_{\rr{irr}}(U,f,p+q)$, which are independent of the choice of the good compactification $(X,g)$ of $(U,f)$.
\end{rmk}

Now we recall the definition of the number  $h^{p,q}_{\rr{LG}}$.
Let $t\in \mathbb{A}^1$ be a regular value of $f$ near infinity.
Consider the monodromy transformation on the relative singular cohomology
\[
T \colon \rr{H}^{i}(U^{\mathrm{an}}, f^{-1}(t)^{\mathrm{an}})
\longrightarrow \rr{H}^{i}(U^{\mathrm{an}}, f^{-1}(t)^{\mathrm{an}}),
\]
which corresponds to moving the smooth fiber $f\inv(t)$ once around infinity. The monodromy operator $N$ induces   the monodromy-weight filtration $W(N,\bullet)$ on $\rr{H}^{i}(U^{\mathrm{an}}, f^{-1}(t)^{\mathrm{an}})$.

\begin{defn}
    The \emph{Landau--Ginzburg Hodge numbers} $h_{\rr{LG}}^{p,q}(U,f)$ are defined by
    $$
        h^{p,q}_{\rr{LG}}(U,f):= \dim_{\mathbb{C}}\gr^{W(N,p+q)}_{2p}\rr{H}^{p+q}(U, U_{t}; \mathbb{C}).
    $$
\end{defn}
 
Katzarkov, Kontsevich, and Pantev expected these two types of Landau--Ginzburg Hodge numbers to coincide.

\begin{conj}[{\cite[Conj. 3.6]{KatzarkovEtAlBogomolovTian17}}]\label{conj:KKP}
    Let $(U,f)$ be a Landau--Ginzburg model, where $U$ is a smooth quasi-projective variety and $f$ a regular function $f\colon U\to \bb{A}^1$ with reduced pole divisor. Then for all $p,q\in \bb{N}$, we have
        $$f^{p,q}_{\rr{LG}}(U,f)=h^{p,q}_{\rr{LG}}(U,f).$$
\end{conj}

\begin{prop}\label{prop:kkp}
    The above conjecture holds if and only if the limiting Hodge structures on $\rr{H}^{\bullet}(U^{\mathrm{an}}, f^{-1}(t)^{\mathrm{an}})$ is of Hodge--Tate type.
\end{prop}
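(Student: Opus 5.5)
Set $H=\rr{H}^{p+q}(U^{\mathrm{an}}, f^{-1}(t)^{\mathrm{an}})$ with its limiting mixed Hodge structure $(F_{\rr{lim}}^\bullet, W^{\rr{lim}}_\bullet)$. The plan is to translate both sides of \cref{conj:KKP} into invariants of this single structure and compare them.

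\textbf{Step 1: rewrite $f_{\rr{LG}}$.} In the situation of \cref{conj:KKP} the pole divisor is reduced, so $\lfloor\alpha P\rfloor=0$ for $\alpha\in[0,1)$. Only the integral part $\alpha=0$ then contributes to $f^{p,q}_{\rr{LG}}$. By \cref{rmk:irregular-Hodge-numbers} we have $f^{p,q}_{\rr{LG}}(U,f)=h^{p}_{\rr{irr}}(U,f,p+q)$, with the index convention fixed so that the integral irregular Hodge number sits in the right degree. Applying \cref{thm:KKP-numbers} with $\alpha=0$, hence $\lambda=1$, gives
\[
f^{p,q}_{\rr{LG}}(U,f)=\dim \gr^p_{F_{\rr{lim}}} H_1 ,
\]
where $H_1$ is the unipotent part. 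I would also check that the fractional irregular numbers, which in general equal $\dim\gr^p_{F_{\rr{lim}}}H_{\lambda}$ for $\lambda\neq1$, play no role in the comparison. If the conjecture is read to include them, they must vanish in the Hodge--Tate case. This is consistent because the Hodge--Tate condition forces the relevant part to be unipotent, or the statement is implicitly restricted to the unipotent part.

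\textbf{Step 2: rewrite $h_{\rr{LG}}$.} Here $W(N,\bullet)$ is the filtration $W^{\rr{lim}}$ up to the standard shift, so
\[
h^{p,q}_{\rr{LG}}=\dim\gr^{W^{\rr{lim}}}_{2p}H .
\]
For a mixed Hodge structure, Deligne's splitting gives
\[
\dim\gr^p_F H=\sum_{q'} h^{p,q'}, \qquad \dim \gr^W_{2p}H=\sum_{a+b=2p}h^{a,b},
\]
where $h^{a,b}=\dim \gr_F^a\gr^W_{a+b}H$.

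\textbf{Step 3: compare.} The conjecture says that for every $p$,
\[
\sum_{b}h^{p,b}(H_1)=\sum_{a+b=2p}h^{a,b}(H),
\]
and we must show this holds for all $p$ if and only if $h^{a,b}(H)=0$ whenever $a\neq b$ (Hodge--Tate type, which includes unipotence).

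The implication "Hodge--Tate $\Rightarrow$ conjecture" is immediate, since both sides reduce to $h^{p,p}$.

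For the converse, sum both sides over all $p$. The left side gives $\dim H_1$ and the right side gives the dimension of the even-weight part of $H$. A cleaner route is to use the Hodge symmetry $h^{a,b}=h^{b,a}$ on each graded piece of the limiting mixed Hodge structure, together with a positivity or induction argument. Induct downward on the largest $a$ with $h^{a,b}\neq 0$ for some $b\neq a$. At the extremal $p$, the identity $\sum_b h^{p,b}=\sum_{a+b=2p}h^{a,b}$ combined with symmetry forces $h^{p,b}=0$ for $b\neq p$, and it also forces the non-unipotent parts to vanish.

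\textbf{Main obstacle.} I expect the hardest step to be this converse direction. Equal totals alone do not force Hodge--Tate, so the argument has to exploit the fact that the identity holds for every $p$ simultaneously, together with symmetry. It also has to handle the odd-weight pieces and the $\lambda\neq1$ eigenspaces, which contribute to neither side's $F$-count in the matching way. I would first try generating functions: encode $\sum_p(\text{LHS}-\text{RHS})x^p=0$, and compare it with the Hodge polynomial $\sum h^{a,b}x^ay^b$ specialized at $y=1$ versus $y=x$ restricted to even weight. Then I would use the symmetry in $(a,b)$ to deduce that all off-diagonal terms vanish.
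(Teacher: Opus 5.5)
\textbf{Gap: unipotence of the monodromy.} Both directions of your Step 3 compare $\dim\gr^p_{F_{\rr{lim}}}H_1$ with $\dim\gr^{W}_{2p}H$ for the \emph{whole} space $H$. They therefore only work once you know $H=H_1$, i.e.\ that the monodromy at infinity is unipotent, and you never derive this from the hypothesis.

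Instead you assert two things that are not available:
\begin{itemize}
\item That Hodge--Tate type ``includes unipotence''. It does not: it is a condition on the numbers $h^{a,b}$ only.
\item That the extremal argument ``also forces the non-unipotent parts to vanish''. This is false. Hodge symmetry holds on $H_1$ and on $H_{\neq 1}$ separately, but $H_{\neq1}$ only appears on the right-hand side. So at the extremal index $m$ you only get $\sum_{b\neq m}h^{m,b}(H_1)=h^{m,m}(H_{\neq 1})$, which is no contradiction.
\end{itemize}
Here is a concrete counterexample in degree $1$. Let $H=H_1\oplus H_{-1}$ with $h^{1,0}(H_1)=h^{0,1}(H_1)=1$ and $h^{0,0}(H_{-1})=h^{1,1}(H_{-1})=1$, with $N$ mapping $\gr^W_2H_{-1}$ isomorphically onto $\gr^W_0H_{-1}$. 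Then $\dim\gr^j_FH_1=1=\dim\gr^W_{2j}H$ for $j=0,1$, so your identity holds, yet $H$ is neither Hodge--Tate nor unipotent. Likewise your ``immediate'' direction Hodge--Tate $\Rightarrow$ conjecture fails whenever some Hodge--Tate $H_\lambda\neq0$ with $\lambda\neq1$ is present. Your remark that $\lfloor\alpha P\rfloor=0$ does not repair this: it only says the Kontsevich complexes are independent of $\alpha$, and says nothing directly about $H_\lambda$.

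\textbf{The fix.} The missing ingredient is exactly the first step of the paper's proof: for reduced pole divisor, $\rr{H}^{p+q}(U^{\rr{an}},f^{-1}(t)^{\rr{an}})_\lambda=0$ for $\lambda\neq1$. There are two ways to see this.
\begin{itemize}
\item Geometrically: near $P$ one has $1/g=x_1\cdots x_k$ in coordinates adapted to the snc divisor $D$. Hence the local monodromy of the nearby cycles at $\infty$ acts trivially on stalks, and the monodromy at infinity is unipotent.
\item Via the paper's results: for reduced $P$ the irregular Hodge filtration jumps only at integers. Combined with \cref{thm:KKP-numbers}, this gives $\dim\gr^p_{F_{\rr{lim}}}H_\lambda=0$ for all $p$ and all $\lambda\neq1$.
\end{itemize}
Once $H=H_1$, your extremal argument with Hodge symmetry is correct. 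It actually spells out the ``if and only if'' that the paper only asserts in one line.

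\textbf{Indexing.} With the paper's conventions, $f^{p,q}_{\rr{LG}}$ equals $h^{q}_{\rr{irr}}(U,f,p+q)$, not $h^p_{\rr{irr}}$. To reduce to the comparison $\dim\gr^q_F=\dim\gr^W_{2q}$ you need $h^{p,q}_{\rr{LG}}=h^{q,p}_{\rr{LG}}$, i.e.\ the symmetry of $W(N,p+q)$ about $p+q$. The paper invokes this explicitly; your ``index convention fixed'' should be replaced by that argument.
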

\begin{proof}
    Notice that when $f$ has reduced pole divisor, the eigenspaces $$\rr{H}^{p+q}(U^{\mathrm{an}}, f^{-1}(t)^{\mathrm{an}})_\lambda=0$$ 
    unless $\lambda=1$. By \cref{rmk:irregular-Hodge-numbers}, the numbers $f^{p,q}_{\rr{LG}}(U,f)$ are identified with the irregular Hodge numbers $h^{q}_{\rr{irr}}(U,f,p+q)$. On the other hand, by \cref{thm:KKP-numbers}, the numbers $h^{q}_{\rr{irr}}(U,f,p+q)$ are identified with the limiting Hodge numbers
    $$ \dim \rr{gr}_{F_{\rr{lim}}}^q \rr{H}^{p+q}(U^{\mathrm{an}}, f^{-1}(t)^{\mathrm{an}})=\dim \rr{gr}_{F_{\rr{lim}}}^q \rr{H}^{p+q}(U^{\mathrm{an}}, f^{-1}(t)^{\mathrm{an}})_1,$$
    which coincides with $h^{p,q}_{\rr{LG}}(U,f)=h^{q,p}_{\rr{LG}}(U,f)$ if and only if the limiting Hodge structure on $\rr{H}^{p+q}(U^{\mathrm{an}}, f^{-1}(t)^{\mathrm{an}})$ is of Hodge--Tate type. 
\end{proof}

\section{Non-degenerate functions} \label{section:snc-pairs}

\subsection{Non-degeneracy conditions}\label{subsection:non-degenerate-functions}
By a \textit{simple normal crossing pair}, we mean a pair $(X,D)$ consisting of a smooth projective variety $X$ and a simple normal crossing divisor $D$ in $X$. In this section, we introduce non-degenerate functions on simple normal crossing pairs as defined by Katz and Mochizuki, and prove basic purity and vanishing statements for their associated exponential mixed Hodge structures.

There is a notion of non-degeneracy in \cite{MochizukiTwistorGKZ15} in the analytic setting as follows:
\begin{defn}\label{defn:mochizuki-non-degenerate}
    Let \( X \) be a complex manifold with a simple normal crossing hypersurface \( D \). Let \( f \) be a meromorphic function on \( (X,D) \).  The function \( f \) is called non-degenerate along \( D \) if for a small neighborhood \( N \) of \( |(f)_\infty| \), 
\begin{itemize}
    \item \( (f)_0 \cap N \) is reduced and non-singular, and
    \item \( N \cap (|(f)_0| \cup D) \) is normal crossing.
\end{itemize} 
\end{defn}
 
In the algebraic setting, we will use the following definition of non-degenerate functions by Katz \cite[\S4]{KatzSommesexponentielles80}.

\begin{defn}\label{defn:non-degenerate}
    A regular function $f$ on a smooth quasi-projective variety $U$ is called \textit{\mbox{non-degenerate}} if
    \begin{enumerate}
        \item there exists a smooth projective variety $X$ and a simple normal crossing divisor $D=\sum_{i=1}^r e_i D_i$ for some positive integers $e_i$ such that $U=X-D$;
        \item there exist a global section $s_0\in \Gamma(X,\cc{O}_X(D))$ and global sections $\sigma_i\in \Gamma(X,\cc{O}_X(D_i))$ such that $D=Z(\sigma_1^{e_1}\dots \sigma_r^{e_r})$ and $f$ agrees with the function
            $$\frac{s_0}{s_\infty}\colon X- D\to \bb{A}^1,$$
        where $s_\infty=\sigma_1^{e_1}\dots \sigma_r^{e_r}$. 
        \item $Z(s_0)$ is smooth and  transversal to $D$.
    \end{enumerate}
    Moreover, a non-degenerate function is called \textit{strongly non-degenerate} if the line bundles $\scr{L}_i$ corresponding to $D_i$ are all ample.
\end{defn}

If we take a non-degenerate function in the sense of \cref{defn:non-degenerate}, then its analytification is non-degenerate in the sense of \cref{defn:mochizuki-non-degenerate}. From now on, we will simply say that $f$ is a non-degenerate function on the simple normal crossing pair $(X,D)$.

\begin{exe}\label{rmk:compactified-LG-comparision}
In \cite{KatzarkovEtAlBogomolovTian17}, the authors introduced the notion of \textit{compactified Landau--Ginzburg models} $((Z,f),D_Z)$, consisting of a smooth projective variety $Z$, a flat morphism
$f \colon Z \to \mathbb{P}^1$, and a reduced simple normal crossings divisor
\[
D_Z = D^h \cup D^v,
\]
where $D^h$ is the horizontal divisor and $D^v$ the pole divisor of $f$, subject to additional conditions.
 
If the horizontal divisor $D^h$ is empty, then $D_Z$ is exactly the pole divisor of $f$, and the zero locus of $f$ is disjoint from the pole divisor $D_Z$. Hence, the compactified Landau--Ginzburg model $((Z,f),D_Z)$ defines a non-degenerate function in the sense of \cref{defn:non-degenerate} on the simple normal crossing pair $(Z,D_Z)$.

Compared with the non-degeneracy condition in \cref{defn:non-degenerate}, we don't require the divisor to be reduced. However, we impose the condition that $D_Z$ is precisely the pole divisor of $f$, rather than allowing an additional horizontal component $D^h$, which is motivated by the purity results that will be established in \cref{lemma:snc-pair-implies-pure}.
\end{exe}
 
\begin{exe}
    Given a Laurent polynomial $f=\sum_{\gamma\in \bb{Z}^d} c_\gamma x^\gamma \in \bb{C}[x_1^{\pm},\dots,x_n^{\pm}]$ on the algebraic torus $(\bb{C}^*)^n$, it is called \textit{convenient} if $0$ is an interior point of its Newton polytope $\Delta$. It is called \textit{non-degenerate} with respect to its Newton polytope $\Delta$ if for any face $\delta$ of $\Delta$ (including $\Delta$ itself), the system of equations
        $$f_\delta=\frac{\partial f_\delta}{\partial x_1}=\cdots=\frac{\partial f_\delta}{\partial x_n}=0$$
    has no solutions in $(\bb{C}^*)^n$, where $f_\delta=\sum_{\gamma \in \delta} c_\gamma x^\gamma $ is the restriction of $f$ to the face $\delta$. 

    Actually, a non-degenerate Laurent polynomial is non-degenerate in the sense of \cref{defn:non-degenerate}. In fact, after choosing a smooth proper toric variety $X(f)$ associated with a refinement of the normal fan of the Newton polytope $\Delta$, the boundary divisor $D^v\cup D^h =X-(\mathbb{C}\cros)^N$ is simple normal crossing and $f$ can be extended to a regular function on $U:=X- D^v$. By \cite[Prop. 4.3]{YuIrregularHodge14}, the zero locus $Z(f)$ is transversal to the vertical divisor $D^v$. Hence, $f$ is non-degenerate on the simple normal crossing pair $(X(f),D^v)$. In particular, if $f$ is convenient and non-degenerate, then the boundary divisor is $D^v$ and the interior $U$ is exactly $(\bb{C}^*)^n$.
\end{exe}

\subsection{Purity and vanishing}\label{subsection:vanishing}
Given a non-degenerate function $f$ on a simple normal crossing pair $(X,D)$, then we have two associated exponential mixed Hodge structures $\mathrm{H}^i(U,f)$ and $\mathrm{H}^i_c(U,f)$ defined in \eqref{eq:EMHS-functions}. The following proposition shows that they are isomorphic and pure of weight $i$.

\begin{prop}\label{lemma:snc-pair-implies-pure}
    Given a non-degenerate function $f$ on a simple normal crossing pair $(X,D)$, then  the associated exponential mixed Hodge structures $\mathrm{H}^i(U,f)$ and $\mathrm{H}^i_c(U,f)$ are isomorphic. Moreover, they are pure of weight $i$ for all $i$.        
\end{prop}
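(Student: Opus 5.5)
The plan is to show that the natural ``forget supports'' morphism $\mathrm{H}^i_c(U,f)\to \mathrm{H}^i(U,f)$ is an isomorphism of exponential mixed Hodge structures. Purity then follows formally. By \cite[A.19]{FresanEtAlHodgetheory22}, $\mathrm{H}^i_c(U,f)$ is mixed of weights $\le i$ and $\mathrm{H}^i(U,f)$ is mixed of weights $\ge i$. An isomorphism between them, compatible with $W^{\rr{EMHS}}_\bullet$, therefore forces both to be pure of weight $i$.

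To compare $f_!$ and $f_*$, I would use the compactification given by the pencil in \cref{defn:non-degenerate}. Set
\[
Y:=\{(x,t)\in X\times \mathbb{A}^1 \mid s_0(x)=t\,s_\infty(x)\},
\]
and let $F\colon Y\to \mathbb{A}^1$ be the second projection. Then $F$ is projective, $U\hookrightarrow Y$ via the graph of $f$, and the complement is
\[
Z:=Y\setminus U=(D\cap Z(s_0))\times \mathbb{A}^1 .
\]
Near a point $(x,t)$ of $Z$, condition (3) lets us take $s_0$ as one of the local coordinates of $X$, transversal to the components of $D$. The equation $s_0=t\,s_\infty$ then solves for $s_0$, so locally
\[
Y\cong Z(s_0)\times \mathbb{A}^1_t ,
\]
with $F$ the projection and $Z$ corresponding to $(D\cap Z(s_0))\times\mathbb{A}^1$. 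In particular $Y$ is smooth along $Z$, $D\cap Z(s_0)$ is a simple normal crossing divisor in $Z(s_0)$, and the pair $(Y,Z)$ is locally a product with $\mathbb{A}^1_t$ in a way compatible with $F$.

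Let $j\colon U\to Y$ and $i\colon Z\to Y$ be the inclusions. There is a distinguished triangle
\[
j_!\mathbb{Q}_U^{\rr{H}}\to j_*\mathbb{Q}_U^{\rr{H}}\to i_*i^*j_*\mathbb{Q}_U^{\rr{H}}\xrightarrow{+1}.
\]
Apply $F_*$. Since $F$ is proper, $F_*j_!=f_!$ and $F_*j_*=f_*$, so the cone of $f_!\mathbb{Q}^{\rr{H}}_U\to f_*\mathbb{Q}^{\rr{H}}_U$ is $(F|_Z)_*\,i^*j_*\mathbb{Q}^{\rr{H}}_U$. By the local product structure above, $i^*j_*\mathbb{Q}_U$ is locally, along $Z$, the pullback from $D\cap Z(s_0)$ of the corresponding complex for $Z(s_0)\setminus D\subset Z(s_0)$. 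The map $F|_Z$ is the proper projection $(D\cap Z(s_0))\times\mathbb{A}^1\to \mathbb{A}^1$. Hence the cohomology sheaves of $(F|_Z)_*\,i^*j_*\mathbb{Q}_U$ are local systems on $\mathbb{A}^1$, and therefore constant; the corresponding mixed Hodge modules are smooth on $\mathbb{A}^1$, hence constant variations. Since $\Pi$ is exact and kills constant objects, applying $\Pi\circ\mathcal{H}^{i-n}$ gives
\[
\mathrm{H}^i_c(U,f)\xrightarrow{\ \sim\ }\mathrm{H}^i(U,f).
\]
The purity argument from the first paragraph then finishes the proof.

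I expect the main obstacle to be making the local-constancy step rigorous. The product structure is only local near $Z$, so one has to argue that $i^*j_*\mathbb{Q}_U$ is cohomologically locally constant along the $\mathbb{A}^1$-direction. This depends only on the analytic germ of $(Y,Z)$ along $Z$, which is locally a product with $\mathbb{A}^1_t$. Combined with properness of $Z\to\mathbb{A}^1$, proper base change then gives that $F_*$ of the cone has locally constant cohomology sheaves. I would try to do this via a Thom--Mather style or Ehresmann argument for the stratified proper map $F$ near $Z$. Alternatively, I would compute $i^*j_*$ explicitly using the simple normal crossing structure, where it is given by $\bigoplus \mathbb{Q}$ on the strata of $(D\cap Z(s_0))\times \mathbb{A}^1$ with Tate twists. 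This makes constancy along $t$ transparent at the level of mixed Hodge modules as well.
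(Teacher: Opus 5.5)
Your proposal is correct and is essentially the paper's argument. It uses the same partial compactification $\tilde U=Y$ with proper $\tilde f=F$, the fact that $\Pi$ kills constant objects on $\mathbb{A}^1$, and the same squeeze between weights $\le i$ and $\ge i$.

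The only difference is how the cone is handled. The paper factors $f_!\mathbb{Q}^{\rr{H}}_U\to f_*\mathbb{Q}^{\rr{H}}_U$ through $\tilde f_*\mathbb{Q}^{\rr{H}}_{\tilde U}$, so it gets two cones instead of your single cone $i^*j_*\mathbb{Q}_U$:
\begin{itemize}
\item $\pr_{1*}\mathbb{Q}^{\rr{H}}_{\mathbb{A}^1\times B}[1]$, which is visibly constant;
\item $\pr_{1*}i^!\mathbb{Q}^{\rr{H}}_{\tilde U}$, which is an iterated cone of the constant objects $\pr_{1*}\mathbb{Q}^{\rr{H}}_{\mathbb{A}^1\times B^{(s)}}[-2s](-s)$, because $\mathbb{A}^1\times B$ is a simple normal crossing divisor in $\tilde U$.
\end{itemize}
This is exactly your ``explicit'' alternative, and it avoids the local-constancy step you flagged.
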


\begin{proof}

    By the construction of $f$, there exist $s_\infty\in \rr{H}^0(X,\cc{O}_X(D))$ and $s_0\in \rr{H}^0(X,\cc{O}_X(D))$  as in \cref{defn:non-degenerate} such that
        $$f :=s_0/s_\infty\colon U\to \bb{A}^1.$$
    We denote by $B=Z(s_0)\cap D$ the base locus.     Consider
        $$\mathfrak{X}=\{(t,x)\in \bb{P}^1\times X\mid ts_{\infty}-s_0=0\}$$
    and let $g\colon \fk{X}\to \bb{P}^1$ be the projection to the first factor. One can easily check that $\tilde U:=g\inv(\bb{A}^1)=(\bb{A}^1\times B)\cup U$ is smooth using the Jacobian criterion. Hence, we have the following commutative diagram
\begin{equation}\label{eq:snc-pair-diagram}
\begin{tikzcd}
    \bb{A}^1\times B \ar[d,"\pr_1"] \ar[r,"i"] & \tilde{U}=g\inv(\bb{A}^1)\ar[d,"\tilde f"]   & U\ar[d,"f"]\ar[l,hook',"j"']\\
    \bb{A}^1 &\bb{A}^1\ar[l,equal] &\bb{A}^1\ar[l,equal]
\end{tikzcd}
\end{equation}
where $\tilde f=g|_{\tilde U}$. For each $J\subset \{1,\dots,r\}$, we set $D_J=\cap_{j\in J}D_j$ and $B_{J}=B \cap D_J$. Notice that $\bb{A}^1\times B_j$ indeed form a simple normal crossing divisor in $g\inv(\bb{A}^1)$.

First consider the distinguished triangle
    $$
        f_!\bb{Q}_U^{\rr{H}}\to \tilde{f}_*\bb{Q}_{\tilde{U}}^{\rr{H}}\to \rr{pr}_{1*}i_*i^*\bb{Q}_{\tilde{U}}^{\rr{H}}\xrightarrow{+1}.
    $$
Notice that $\rr{pr}_{1*}i_*i^*\bb{Q}_{\tilde{U}}^{\rr{H}}=\rr{pr}_{1*}\bb{Q}_{\bb{A}^1\times B}^{\rr{H}}[1]$  is a constant mixed Hodge module on $\bb{A}^1$. So the functor $\Pi$ annihilates the cone of $f_!\bb{Q}_U^{\rr{H}}\to \tilde{f}_*\bb{Q}_U^{\rr{H}}$ and yields the  isomorphism $\mathrm{H}^i_c(U,f)\to\mathrm{H}^i (\tilde{U},\tilde{f}) $ for any $i$.
 
Similarly, consider the dual distinguished triangle
$$
    \rr{pr}_{1*}i^!\bb{Q}^{\rr{H}}_{\tilde{U}} \to \tilde{f}_*\bb{Q}^{\rr{H}}_{\tilde{U}}\to f_*\bb{Q}^{\rr{H}}_{U} \xrightarrow{+1}.
$$
Let $B^{(s)}=\coprod_{|J|=s} B^J$, which is equipped with a natural morphism $ i_s\colon \bb{A}^1\times B^{(s)}\to \bb{A}^1\times B $. Using the fact that $(i\circ i_s)^!\bb{Q}_X^{\rr{H}}=\bb{Q}_{\bb{A}^1\times B^{(s)}}^{\rr{H}}[-2s](-s)$, we have the following resolution of $ i^!\bb{Q}^{\rr{H}}_{\tilde U}$:
$$
    0\leftarrow i^!\bb{Q}^{\rr{H}}_{\tilde U} \leftarrow i_{1*}\bb{Q}^{\rr{H}}_{\bb{A}^1\times B^{(1)}}[-2](-1)\leftarrow i_{2*}\bb{Q}^{\rr{H}}_{\bb{A}^1\times B^{(2)}}[-4](-2) \leftarrow \cdots,
$$
where the morphisms are induced by the iterating sums of the inclusion morphisms. Hence, $\rr{pr}_{1*}i^!\bb{Q}^{\rr{H}}_{\tilde{U}}$ is isomorphic to the iterated cone of
\[
    [\rr{pr}_{1*}\bb{Q}^{\rr{H}}_{\bb{A}^1\times B^{(1)}}[-2](-1)\leftarrow \rr{pr}_{1*}\bb{Q}^{\rr{H}}_{\bb{A}^1\times B^{(2)}}[-4](-2)\leftarrow \cdots ].
\]
Similar to the above, the functor $\Pi$ annihilates these constant mixed Hodge modules $\rr{pr}_{1*}\bb{Q}^{\rr{H}}_{\bb{A}^1\times B^{(d)}}[-2d](-d)$ on $\bb{A}^1$. So it also annihilates the cone of $\tilde{f}_*\bb{Q}_{\tilde{U}}^{\rr{H}}\to f_*\bb{Q}_U^{\rr{H}}$ and yields the  isomorphism $\mathrm{H}^i(\tilde{U},\tilde{f})\to\mathrm{H}^i (U,f) $ for any $i$.

Combining the above discussion, we deduce an isomorphism 
        $$\mathrm{H}^i_c(U,f)\to \mathrm{H}^i(\tilde{U},\tilde{f})\to \mathrm{H}^i(U,f)$$
on the exponential mixed Hodge structures for any $i$. Then, the  pureness follows from the fact that $\mathrm{H}^i_{\rr{dR},c}(U,f)$ and $\mathrm{H}^i_{\rr{dR}}(U,f)$ are mixed of weights $\leq i$ and $\geq i$ respectively.
\end{proof}

\begin{cor}\label{cor:vanishing}
    If $f$ is strongly non-degenerate, the exponential mixed Hodge structures $\mathrm{H}^i(U,f)$ are zero unless $i=\dim U$. 
\end{cor}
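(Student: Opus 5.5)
Since $\mathrm{H}^i(U,f)\cong\mathrm{H}^i_c(U,f)$ by \cref{lemma:snc-pair-implies-pure}, and the exact faithful de Rham fiber functor sends these to $\mathrm{H}^i_{\dR}(U,f)$ and $\mathrm{H}^i_{\dR,c}(U,f)$, it suffices to show $\mathrm{H}^i_{\dR}(U,f)=0$ for $i>n=\dim U$ and $\mathrm{H}^i_{\dR,c}(U,f)=0$ for $i<n$. An exponential mixed Hodge structure with vanishing de Rham fiber is zero: its Fourier transform is localized, so $\Xi_{\dR}M=0$ forces $\rr{FL}(M)=0$, hence $M=0$.

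\emph{Upper vanishing.} $U=X-\operatorname{supp}D$ is affine, because $\mathscr{L}=\bigotimes_i\mathscr{L}_i$ is ample (each $\mathscr{L}_i$ is ample by strong non-degeneracy) and $U$ is the complement of the zero locus of the section $\prod\sigma_i$ of $\mathscr{L}$. Then $\mathrm{H}^i_{\dR}(U,f)$ is the cohomology of the global algebraic twisted de Rham complex $\Gamma(U,\Omega^\bullet_U)$, which lives in degrees $0,\dots,n$. Hence $\mathrm{H}^i_{\dR}(U,f)=0$ for $i>n$, and so $\mathrm{H}^i(U,f)=0$ for $i>n$.

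\emph{Lower vanishing.} Poincaré--Verdier duality identifies $\mathrm{H}^i_{\dR,c}(U,f)$ with the dual of $\mathrm{H}^{2n-i}_{\dR}(U,-f)$. The function $-f=(-s_0)/s_\infty$ is again strongly non-degenerate on $(X,D)$ with the same data up to sign. For $i<n$ we have $2n-i>n$, so the upper vanishing applied to $-f$ gives $\mathrm{H}^i_c(U,f)=0$. Combined with the isomorphism $\mathrm{H}^i_c(U,f)\cong\mathrm{H}^i(U,f)$, this gives $\mathrm{H}^i(U,f)=0$ for $i<n$, which completes the argument.

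\emph{Main obstacle.} The delicate step is checking the duality and the conservativity of $\Xi_{\dR}$ on $\mathrm{EMHS}$. As a fallback, one can avoid duality and argue at the level of perverse sheaves: $f_!\bb{Q}_U[n]$ is perverse-semi-concave by Artin vanishing for the affine morphism $f$ (affine source), so $\mathcal{H}^{i-n}f_!\bb{Q}^{\rr{H}}_U=0$ for $i<n$, and similarly $\mathcal{H}^{i-n}f_*\bb{Q}^{\rr{H}}_U=0$ for $i>n$. Applying $\Pi$ gives the vanishing of $\mathrm{H}^i_c(U,f)$ and $\mathrm{H}^i(U,f)$ directly, and \cref{lemma:snc-pair-implies-pure} concludes.
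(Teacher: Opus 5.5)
Your proposal is correct and follows the paper's proof. Affineness of $U$ kills $\mathrm{H}^i_{\dR}(U,f)$ for $i>\dim U$ and $\mathrm{H}^i_{\dR,c}(U,f)$ for $i<\dim U$, and the isomorphism $\mathrm{H}^i(U,f)\cong\mathrm{H}^i_c(U,f)$ from the purity proposition then gives the rest. You also supply details the paper leaves implicit: why $U$ is affine (ampleness of $\bigotimes_i\mathscr{L}_i$), the duality or Artin-vanishing argument for the compactly supported side, and the conservativity of $\Xi_{\dR}$ needed to pass from de Rham fibers back to exponential mixed Hodge structures; your Artin-vanishing fallback avoids the de Rham fiber altogether.
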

\begin{proof}
    Because $U$ is affine, the twisted de Rham cohomology $\mathrm{H}^i_{\rr{dR}}(U,f)$ is zero unless $i\leq \dim U$, and the twisted de Rham cohomology with compact support $\mathrm{H}^i_{\rr{dR},c}(U,f)$ is zero unless $i\geq \dim U$. By \cref{lemma:snc-pair-implies-pure}, they are isomorphic. Therefore, they are both zero unless $i=\dim U$.
\end{proof}

\begin{exe}\label{example}
% \subsubsection{}
Let $X=\bb{P}^2$ and $D=L_0\cup L_1\cup L_2$, where the three lines $L_0, L_1$ and $L_2$ are defined by $x_0,x_1$, and $x_2$ respectively. In particular, the interior $U:=X-|D|$ is the torus $\bb{G}_{m}^2$. The (strongly) non-degenerate functions $f$ on $(X,D)$ are of the form
     $$\frac{F}{x_0x_1x_2}\colon U\to \bb{A}^1$$
 where $F$ are cubic homogeneous polynomials in $x_0,x_1$, and $x_2$, such that $Z(F)$ is transverse to $D$. The corresponding irregular Hodge numbers will be calculated in \cref{example:calculation}.
\end{exe}

\begin{exe}\label{example2}
% \subsubsection{}
Let $X=\bb{P}^2$ and $D=Z(x_0^3+x_1^3+x_2^3+3x_0x_1x_2)$. We consider the regular function 
     $$f=\frac{x_0x_1x_2}{x_0^3+x_1^3+x_2^3+3x_0x_1x_2}\colon U\to \bb{A}^1$$
on the interior $U:=X-D$ as in \cref{example:counterexample}. It is degenerate because the zero locus $Z(x_0x_1x_2)$ is not smooth. However, the functions $f+c$ for general $c\in \bb{C}$ are (strongly) non-degenerate. In fact, the zero locus $Z(x_0x_1x_2+c(x_0^3+x_1^3+x_2^3+3x_0x_1x_2))$ is smooth and transverse to $D$ for general $c$ by Bertini's theorem. 

However, the irregular Hodge numbers of $(U,f)$ are the same as those of $(U,f+c)$ by \cref{theorem:main}. We will  calculate the irregular Hodge numbers in \cref{example:calculation2}.
\end{exe}

\subsection{Irregular Hodge numbers of non-degenerate functions}\label{sec:proof-of-main}
 
In this section, we prove \cref{intro::non-degenerate}. We keep the notations as in \cref{section:snc-pairs}.

Let $D=P=\sum_{i=1}^r e_iD_i$ be the simple normal crossing divisor in $X$ and  $s_\infty\in \rr{H}^0(X,\cc{O}_X(D))$ a fixed section as in \cref{defn:non-degenerate}. We denote by $S\subset \rr{H}^0(X,\cc{O}_X(D))$ the subset containing sections $s $ such that $B_s=Z(s)$ intersects $P$ transversally. For $s\in S$, we denote by
    $$f_s:=s/s_\infty\colon U\to \bb{A}^1$$
the non-degenerate function for $(X,D)$ associated with $s$ and by $B_s=Z(s)\cap P$ the base locus. We define
    $$\mathfrak{X}_s=\{(t,x)\in \bb{P}^1\times X\mid ts_{\infty}-s=0\}$$
and $g_s\colon \fk{X}_s \to \bb{P}^1$ be the projection to the first factor.

Notice that $U$ is embedded in $\fk{X}_s$ via the graph embedding of $f_s$. We define  $\tilde{U}_s=g_s\inv(\bb{A}^1)$, which  contains $U$.  We denote by the restrictions of $g_s$ to   $\tilde{U}_s$ by   $\tilde{f}_s$, where $\tilde{f}_s$ is proper. Then we have the following commutative diagram 
\begin{equation}\label{eq:diagram-KKP}
\begin{tikzcd}
    \fk{X}_s \ar[d,"g_s"]  & \tilde{U}_s =\tilde{g}_s\inv(\bb{A}^1)\ar[d,"\tilde{f}_s"] \ar[l,hook'] & U\ar[d,"f_s"]\ar[l,hook']\\
    \bb{P}^1 &\bb{A}^1\ar[l,hook'] &\bb{A}^1\ar[l,equal].
\end{tikzcd}
\end{equation}
 
\begin{lem}\label{lem:independence-2}
    For any $\alpha\in \bb{Q}$ and  $i\in \bb{N}$, the Hodge numbers of  $\psi_{1/t,\exp(-2\pi i \alpha)} \rr{R}^ig_{s*}\bb{Q}_{\cc{X}_s}^{\rr{H}}$  are independent of $s\in S$.
\end{lem}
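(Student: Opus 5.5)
Fix $s, s' \in S$. The plan is to put all the $g_s$ into one family over the parameter space and use the nilpotent orbit theorem to show that the limit Hodge numbers at $t=\infty$ are locally constant in $s$. Since $S$ is a Zariski open subset of the vector space $\rr{H}^0(X,\cc{O}_X(D))$, it is connected, so local constancy will give the lemma.

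First I will form the total space
\[
\mathcal{X}=\{(s,t,x)\in S\times\bb{P}^1\times X \mid ts_\infty - s(x)=0\},
\qquad G\colon \mathcal{X}\to S\times \bb{P}^1 .
\]
The map $G$ is proper, and its fiber over $s$ is $g_s$.

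Next I will check the local structure near $t=\infty$. Put $\tau=1/t$. Near $\tau=0$ the equation becomes $s_\infty-\tau s=0$. Over $\tau=0$ the fiber is $D$ with its multiplicities $e_i$. Transversality of $Z(s)$ to $D$ implies that, locally along $\{\tau=0\}$, the space $\mathcal{X}$ is smooth over $S$ away from the base locus $S\times\{0\}\times B_s$. At points of $B_s$ there is no singularity either: there $s$ is part of a coordinate system, and $\tau = s_\infty/s$ is, relative to $S$, a monomial $\prod\sigma_i^{e_i}$ times a unit in coordinates in which $D$ is snc.

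So, étale-locally over $S$, the map $G$ has the form $\tau=\prod x_i^{e_i}\cdot u$. The $e_i$ are constant, and the strata $D_J$ and $B_J$ vary smoothly, because transversality is an open condition. In other words, $G$ is a family over $S$ of semistable-type degenerations with fixed multiplicities, and the pair $(\mathcal{X}, G^{-1}(S\times\{\infty\}))$ is a relative snc pair over $S$.

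From this I draw two consequences:
\begin{itemize}
\item The sheaves $\rr{R}^iG_*\bb{Q}$ are local systems on $S\times\bb{G}_m^{\tau}$ near $\tau=0$, since Ehresmann applies to the proper map $G$ away from the critical locus in $t$.
\item Their local monodromy around $\tau=0$ is constant in $s$.
\end{itemize}

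I will then compute the nearby cycles in families, either via Saito's theory or via Steenbrink's relative logarithmic de Rham complex $\Omega^\bullet_{\mathcal{X}/S\times\Delta}(\log)$. The goal is to show that $\psi_{\tau,\lambda}\rr{R}^iG_*\bb{Q}^{\rr{H}}$, restricted to $S\times\{0\}$, is an admissible variation of mixed Hodge structures over $S$, i.e. a smooth mixed Hodge module. Its fiber at $s$ must be $\psi_{1/t,\lambda}\rr{R}^ig_{s*}\bb{Q}^{\rr{H}}$; this base change holds because the degeneration is uniform over $S$. Once these are in place, the Hodge bundles $\gr_F^p$ of this variation are locally free on $S$, and their ranks are the Hodge numbers in question.

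The main obstacle is the step showing that the nearby cycle mixed Hodge module along $\tau=0$ is smooth on $S$. I plan to handle it in one of two ways:
\begin{itemize}
\item Apply Saito's compatibility of $\psi$ with smooth base change after using the local product structure. Locally in the analytic topology, $G$ is a product of a fixed model $\tau=\prod x_i^{e_i}$ with $S$.
\item Alternatively, invoke the several-variable nilpotent orbit / $\mathrm{SL}_2$ theorem (Cattani--Kaplan--Schmid) for the variation on $S\times\Delta^*$. Its limit Hodge filtration varies holomorphically in $s$ with constant $N$, so $\dim F^p_{\lim}$ is lower semicontinuous. Combined with constancy of total dimension and Hodge symmetry of the limit MHS, this gives constancy.
\end{itemize}
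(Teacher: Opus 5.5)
Your overall plan (put all $g_s$ in one family over $S$, show the limit Hodge numbers at $t=\infty$ are locally constant, then use connectedness of $S$) is the paper's. Your second option for the main obstacle is precisely the paper's proof. The local analysis on which you build the first option, however, is false and should be dropped.

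\textbf{The false local model.} Take a point $(s,\infty,x)$ with $x\in B_s$. There $s(x)=0$, so $\tau=s_\infty/s$ is not a function at all; indeed the whole line $\mathbb{P}^1\times\{x\}$ lies in $\mathfrak{X}_s$. In particular it is not a monomial times a unit. Choose coordinates with $D_i=\{x_i=0\}$ ($i\in I$) and $y$ a local equation of $Z(s)$. Then the local equation of $\mathcal{X}$ is $\prod_{i\in I}x_i^{e_i}=\tau y$ up to a unit. This is smooth only when $\sum_{i\in I}e_i=1$:
\begin{itemize}
\item where $B_s$ meets $D_1\cap D_2$ you get the ordinary double point $x_1x_2=\tau y$ (times a smooth factor);
\item on a component with $e_1=2$ you get $x_1^2=\tau y$.
\end{itemize}
Varying $s$ does not help, since the $s$-derivative of the equation is $\tau\,\delta s(x)$, which vanishes at $\tau=0$.

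So $(\mathcal{X},G^{-1}(S\times\{\infty\}))$ is not a relative snc pair, and $G$ is not of semistable type along $\{\infty\}\times B_s$. Neither the product-with-the-model-$\tau=\prod x_i^{e_i}$ argument nor Steenbrink's relative logarithmic complex applies as stated. Likewise, ``the degeneration is uniform'' does not justify the base change. Even with a correct local model, a local product structure alone would not give smoothness of $\rr{R}G_*\psi$ over $S$ without a global topological triviality argument.

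\textbf{Your second option is sound.} It needs none of the above. It only uses that, for $s$ in a compact $K\subset S$, $\rr{R}^iG_*\mathbb{Q}$ is a polarizable VHS on $K\times\{0<|\tau|<\epsilon\}$. This is where a correct local computation enters: near $B_s$ with $\tau\neq 0$ one can solve $y=\prod x_i^{e_i}/\tau$, so the fibres near $D$ are smooth, uniformly in $s\in K$. This is exactly what the paper's ``uniform bound $r$ by compactness'' tacitly relies on.

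The nilpotent orbit theorem with holomorphic parameters (the paper cites Deng, Thm.~A) then says that $j_*F^p\cap\mathcal{H}_\alpha$ is a subbundle of Deligne's canonical extension over $K^\circ\times\Delta$. Its restriction to $\{s\}\times\Delta$ is the corresponding extension for $g_s$. Its fibre modulo $\tau$ at $(s,0)$ is $F^p\psi_{1/t,\lambda}\rr{R}^ig_{s*}\mathbb{Q}^{\rr{H}}$. This gives the base change for free. Since subbundles have locally constant rank, you get constancy directly, and the semicontinuity-plus-Hodge-symmetry detour is unnecessary.

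Your use of the connectedness of the Zariski open set $S$ to pass from local to global constancy is cleaner than the paper's choice of a single disk containing both $s_1$ and $s_2$.
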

\begin{proof}
    Using GAGA, it suffices to prove the same properties in the analytical setting. As $\rr{R}^ng_{s*}\bb{Q}\Hodge$ is a mixed Hodge module on $\bb{P}^1$, it restricts to a variation of Hodge structure outside critical values of $g_{s}$. Hence, there exists a real number $r(s)$ such that $\rr{R}^ng_{s*}\bb{Q}\Hodge$ restricts to a VHS on $\Delta_{>r(s)}=\{r\mid |r|>r(s)\}$.

    For any two points $s_1$ and $s_2$ in $S$, choose a compact disk $\bar{\Delta}\subset S$, such that $s_1$ and $s_2$ lies in the interior $\bar{\Delta}^\circ$. By the compactness of $\bar{\Delta}$, we find a uniform bound $r$ for $\{r(s)\mid s\in \bar\Delta\}$. In particular, the critical values of $g_s$ are contained in $\Delta_{\leq r}$ for all $s\in \bar\Delta$. Hence, via the map
        $$\mathcal{Z}=\{(t,x,s)\in  \Delta_{>r}\times X\times \bar\Delta\mid s(x)=t s_\infty(x)\}\xrightarrow{\pi}  \Delta_{>r}\times \bar\Delta$$
    that sends $(t,x,s)$ to $(t,s)$,
    the mixed Hodge module $\mathrm{R}^i\pi_*\bb{Q}^{\rr{H}}_{\cc{Z}}$ restricts to a VHS on $\bar{\Delta}^\circ\times \Delta\cros_{>r}$, denoted by $(\mathcal{H}^i,\nabla,F^p)$. In particular, the restriction of $\mathrm{R}^i\pi_*\bb{Q}^{\rr{H}}_{\cc{Z}}$ to $s\times \Delta_{>r}$ is $\mathrm{R}^ig_{s*}\bb{Q}\mid_{\Delta_{>r}}$.

     For each $0<\alpha\leq 1$, denote by $\cc{H}^i_{\alpha}$ the canonical extension of generalized eigenspaces of the monodromy operator of $\mathrm{R}^i\pi_*\bb{Q}^{\rr{H}}_{\cc{Z}}$ with respect to $\exp(-2\pi i \alpha)$ through the open inclusion $j\colon \Delta_{>r} \hookrightarrow \Delta_{>r}\cup \{\infty\}$, and $\nabla_\alpha$ the connection with logarithmic pole. Then, using the      version of the nilpotent orbit theorem in \cite[Thm.\,A]{DengNilpotentOrbit23}, we deduce that $F^p\cc{H}_\alpha^i:=j_*F^p\cap \cc{H}_\alpha^i$ are sub-vector bundles of $\cc{H}_\alpha^i$ on $\bar\Delta^\circ\times( \Delta_{>r}\cup\{\infty\})$.

    Since  
    \begin{equation}\label{eq:nearby-cycle-filtration}
        F^p\cc{H}^i_\alpha/\tfrac{1}{t}F^p\cc{H}^i_\alpha=F^p\psi_{1/t,,\exp(-2\pi i\alpha)}\rr{R}^i\pi_{*}\bb{Q},
    \end{equation}
    with $F^i$ being the Hodge filtration, we deduce that
        $$\begin{aligned}
            F^p(\cc{H}^i_\alpha/\tfrac{1}{t}\cc{H}_\alpha^i)|_{\{s\}\times \{\infty\}}
            &=F^p\cc{H}^i_\alpha|_{\{s\}\times \{\infty\}}/\tfrac{1}{t}F^p\cc{H}_\alpha^i|_{\{s\}\times \{\infty\}}\\
            &=F^p\psi_{1/t,\exp(-2\pi i\alpha)}\rr{R}^ig_{s*}\bb{Q}_{\fk{X}_s}\Hodge
        \end{aligned}$$
    for any $s\in S$. In particular, the Hodge numbers of $\psi_{1/t}\mathrm{R}^ig_{s*}\bb{Q}\Hodge$ are independent of $s$.
\end{proof}

\begin{prop}\label{prop:independence}
    For any rational number $\alpha$, $\lambda=\exp(2\pi i\alpha)$, and any $p\in \bb{Z}$, the dimensions of $\rr{gr}_F^p\psi_{1/t,\lambda}\Pi\,\rr{R}^r{f}_{s*}\mathbb{Q}^{\rr{H}}_{\tilde U} $ are independent of $f_s$. 
     
\end{prop}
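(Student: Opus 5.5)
The plan is to reduce the statement to \cref{lem:independence-2} by comparing $\Pi\,\rr{R}^r f_{s*}\bb{Q}^{\rr{H}}$ with the direct image along the proper map $\tilde f_s$, and then with the restriction of $g_s$ near $\infty$.

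\textbf{Step 1: replace $f_s$ by $\tilde f_s$.} The proof of \cref{lemma:snc-pair-implies-pure} shows that $\Pi$ kills the cones of $\tilde f_{s*}\bb{Q}^{\rr{H}}_{\tilde U_s}\to f_{s*}\bb{Q}^{\rr{H}}_U$ and of $f_{s!}\to\tilde f_{s*}$, because both cones are built from constant modules $\rr{pr}_{1*}\bb{Q}^{\rr{H}}_{\bb{A}^1\times B_s^{(d)}}$. Hence we get
\[
\Pi\,\rr{R}^r f_{s*}\bb{Q}^{\rr{H}}\cong \Pi\,\rr{R}^r\tilde f_{s*}\bb{Q}^{\rr{H}}_{\tilde U_s}.
\]

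\textbf{Step 2: remove $\Pi$.} I will rerun \cref{prop:distinguished-triangle} with $U$ replaced by $\tilde U_s$ and $f$ by $\tilde f_s$; the proof only used smooth base change and the projection formula, so it goes through. This gives the triangle
\[
\tilde f_{s*}\bb{Q}^{\rr{H}}\to\Pi\,\tilde f_{s*}\bb{Q}^{\rr{H}}\to \rr{R}\Gamma(\tilde U_s,\bb{Q}^{\rr{H}})\otimes\bb{Q}^{\rr{H}}_{\bb{A}^1}.
\]
Applying $\psi_{1/t,\lambda}$ and taking cohomology, the constant term contributes only to $\lambda=1$.
\begin{itemize}
\item For $\lambda\neq 1$ we get $\psi_{1/t,\lambda}\Pi\,\rr{R}^r\tilde f_{s*}=\psi_{1/t,\lambda}\rr{R}^r\tilde f_{s*}=\psi_{1/t,\lambda}\rr{R}^r g_{s*}\bb{Q}^{\rr{H}}_{\fk{X}_s}$, since $\tilde U_s=g_s^{-1}(\bb{A}^1)$ and $\psi_{1/t}$ only sees a punctured neighbourhood of $\infty$. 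The Hodge numbers are then constant by \cref{lem:independence-2}.
\item For $\lambda=1$ we get a long exact sequence of mixed Hodge structures involving $\psi_{1/t,1}\rr{R}^\bullet g_{s*}$, $\psi_{1/t,1}\Pi\,\rr{R}^\bullet\tilde f_{s*}$ and $\rr{H}^\bullet(\tilde U_s)$. Morphisms of MHS are strict for $F$, so graded Hodge dimensions are additive along exact sequences. It is therefore enough to show that the Hodge numbers of $\rr{H}^\bullet(\tilde U_s)$ and all connecting maps are independent of $s$.
\end{itemize}

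\textbf{Step 3: the $\lambda=1$ terms.} For the cohomology of $\tilde U_s$ itself: $\tilde U_s\to X$ is the blow-up of $X$ along the smooth codimension-two base locus $B_s$ with the strict transform of $D$ removed, so its Hodge numbers are determined by those of the strata $D_J$ and $B_s\cap D_J$. The latter vary in a smooth family over the connected set $S$, so their Hodge numbers are constant.

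The connecting maps are the main obstacle. I would avoid analysing them directly and instead put everything in the family over $\bar\Delta^\circ\subset S$, exactly as in \cref{lem:independence-2}. The triangle of Step 2 exists relatively over the parameter space. On a small enough $\bar\Delta^\circ$ the total space $\tilde{\mathcal U}$ is topologically locally trivial over $\bar\Delta^\circ$, because the strata vary smoothly. The relative version of $\psi_{1/t,1}\Pi\,\rr{R}\tilde f_*$ is then a mixed Hodge module on $\bar\Delta^\circ$ whose underlying local system is locally constant, so it is an admissible variation of mixed Hodge structure. By Griffiths transversality and the fact that the $F^p$ are subbundles, its Hodge numbers are locally constant.

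Finally, connectedness of $S$ (it is a Zariski open subset of a vector space) concludes the argument. The delicate point I expect is to justify this relative nearby-cycle smoothness, namely that the relative $\psi$ commutes with restriction to each $s$. I would try to deduce it from the nilpotent orbit extension used in \cref{lem:independence-2}, combined with the long exact sequence above.
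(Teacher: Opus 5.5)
Your proposal is correct and follows essentially the paper's route. The paper also passes to the family over $S$ and applies $\psi_{1/t}$ to the relative version of the triangle of \cref{prop:distinguished-triangle}; it uses the nilpotent-orbit argument of \cref{lem:independence-2} to see that $\psi_{1/t}\rr{R}^i\tilde f_{S*}\bb{Q}^{\rr{H}}$ is a variation on $S$, and then deduces from the long exact sequence, via the images of the maps into the constant term $\rr{H}^i(\tilde U,\bb{Q}^{\rr{H}})\otimes\bb{Q}^{\rr{H}}_S$, that the fibrewise Hodge numbers of $\psi_{1/t}\Pi\,\rr{R}^i\tilde f_{S*}\bb{Q}^{\rr{H}}$ are constant, which is exactly the fix you propose for your delicate point.

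Your Step 1 and the $\lambda\neq 1$ shortcut make explicit what the paper uses silently. The blow-up description in your Step 3 is imprecise ($B_s$ is only a normal crossing divisor in $Z(s)$, and the centre is non-reduced when some $e_i>1$), but you do not need it once you argue in the family.
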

\begin{proof}
We define
    $$\mathfrak{X}_S=\{(t,x,s)\in \bb{P}^1\times X\times S\mid ts_{\infty}(x)-s(x)=0\}$$
and get a diagram 
\begin{equation}
\begin{tikzcd}
    \fk{X}_S \ar[d,"g_S"]  & \tilde{U}_S =\tilde{g}_S\inv(\bb{A}^1_S)\ar[d,"\tilde{f}_S"] \ar[l,hook']  \\
    \bb{P}^1_S &\bb{A}^1_S\ar[l,hook']  
\end{tikzcd}
\end{equation}
such that the restriction to each $s\in S$ recovers the previous diagram  \eqref{eq:diagram-KKP}. Using the same argument as that in the proof of \cref{prop:distinguished-triangle}, we have a distinguished triangle
    $$
        \tilde{f}_{S*}\bb{Q}_{\tilde U_S}^{\rr{H}} \to \Pi\,  \tilde{f}_{S*}\bb{Q}_{\tilde U_S}^{\rr{H}} \to \rr{R}\Gamma(\tilde{U},\bb{Q}^{\rr{H}}_{\tilde U})\otimes_{\bb{C}}\bb{Q}_{\bb{A}^1_S}^{\rr{H}}\xrightarrow{+1}
    $$
    in $\rr{D}^b(\rr{MHM}(\bb{A}^1_S))$,
where $\Pi$ is the relative version of the functor defined in \eqref{eq:Pi}.
Applying the nearby cycle functor $\psi_{1/t}$, we have a long exact sequence
    $$
        \dots\to
        \psi_{1/t}\rr{R}^i\tilde{f}_{S*}\bb{Q}_{\tilde U_S}^{\rr{H}} 
        \xrightarrow{a_i}
        \psi_{1/t}\Pi\,  \rr{R}^i\tilde{f}_{S*}\bb{Q}_{\tilde U_S}^{\rr{H}} 
        \xrightarrow{b_i} 
        \rr{H}^i(\tilde{U},\bb{Q}^{\rr{H}}_{\tilde U}) \otimes \bb{Q}_S^{\rr{H}}
        \xrightarrow{c_i} 
        \psi_{1/t}\rr{R}^{i+1}\tilde{f}_{S*}\bb{Q}_{\tilde U_S}^{\rr{H}} 
        \to \cdots.
    $$

By \eqref{eq:nearby-cycle-filtration}, the mixed Hodge module $\psi_{1/t}\rr{R}^i\tilde{f}_{S*}\bb{Q}_{\tilde U_S}^{\rr{H}}$ are smooth on $S$. In particular, the morphisms $c_i$ are morphisms of variation of mixed Hodge structures, so that $\im(c_i)$ are again variation of mixed Hodge structures. So we have an exact sequence
$$
    0\to
    \rr{Im}(c_{i-1}) 
    \to
     \psi_{1/t}\rr{R}^i\tilde{f}_{S*}\bb{Q}_{\tilde U_S}^{\rr{H}} 
    \xrightarrow{a_i}
    \psi_{1/t}\Pi\,  \rr{R}^i\tilde{f}_{S*}\bb{Q}_{\tilde U_S}^{\rr{H}}  
    \xrightarrow{b_i}
    \rr{H}^i(\tilde{U},\bb{Q}^{\rr{H}}_{\tilde U}) \otimes \bb{Q}_S^{\rr{H}}
    \xrightarrow{c_i}
    \rr{Im}(c_{i }) 
    \to 0.
$$
Taking the fiber at $s\in S$, we  get an exact sequence
$$
    0\to
    \rr{Im}(c_{i-1})\mid_{s} 
    \to
     \psi_{1/t}\rr{R}^i\tilde{f}_{s*}\bb{Q}_{\tilde U}^{\rr{H}} 
    \xrightarrow{a_i}
    \psi_{1/t}\Pi\,  \rr{R}^i\tilde{f}_{s*}\bb{Q}_{\tilde U}^{\rr{H}}  
    \xrightarrow{b_i}
    \rr{H}^i(\tilde{U},\bb{Q}^{\rr{H}}_{\tilde U}) \otimes \bb{Q}^{\rr{H}}
    \xrightarrow{c_i}
    \rr{Im}(c_{i })\mid_{s} 
    \to 0.
$$

As $\rr{Im}(c_{i-1})$ and $\rr{Im}(c_i)$ are variation of mixed Hodge structures on $S$, the Hodge numbers of their fibers are independent of $s\in S$. By \cref{lem:independence-2}, the Hodge numbers of $\psi_{1/t}\rr{R}^i\tilde{f}_{s*}\bb{Q}_{\tilde U}^{\rr{H}}$ are also independent of $s\in S$. Hence, by the long exact sequence, the Hodge numbers of $\psi_{1/t}\Pi\,  \rr{R}^i\tilde{f}_{s*}\bb{Q}_{\tilde U}^{\rr{H}} $ are independent of $s\in S$.
\end{proof}

\begin{proof}[Proof of \upshape\cref{intro::non-degenerate}]
    
For any two non-degenerate functions $f_1$ and $f_2$ with respect to $(X,D)$, there exist two sections $s_1$ and $s_2$ in $\rr{H}^0(X,\cc{O}_X(D))$ such that $f_j=f_{s_j}$ for $j=1,2$. 
By \cref{cor:stationary-phase-functions}, one has
    \begin{equation}\label{eq:irregular-Hodge-max-ext}
        \begin{split}
            \dim \rr{gr}_{F_{\rr{irr}}}^{\alpha+p}\rr{H}^{i-n}_{\rr{dR}}(U,f_j) 
            &=\dim \mathrm{gr}_F^p\psi_{1/t,\lambda}\Pi\,\rr{R}^{i-n}f_{j*}\bb{Q}^{\rr{H}}_U\\ 
        \end{split}
    \end{equation}
for $j=1,2$, $\alpha\in [0,1)$, and $\lambda=\exp(2\pi i \lambda)$. By \cref{prop:independence}, we have
    $$\dim \mathrm{gr}_F^p\psi_{1/t,\lambda}\Pi\,\rr{R}^{i-n}f_{1*}\bb{Q}^{\rr{H}}_U=\dim \mathrm{gr}_F^p\psi_{1/t,\lambda}\Pi\,\rr{R}^{i-n}f_{2*}\bb{Q}^{\rr{H}}_U.$$ 
 Hence, we  conclude that 
    $$\dim \rr{gr}_{F_{\rr{irr}}}^{\alpha}\rr{H}^i_{\rr{dR}}(U,f_1)=\dim \rr{gr}_{F_{\rr{irr}}}^{\alpha}\rr{H}^i_{\rr{dR}}(U,f_2),$$
for any $\alpha\in \bb{Q}$.
\end{proof}

\section{An explicit formula for strongly non-degenerate functions}\label{sec:explicit-formula}
In this section, we give an explicit formula for the irregular Hodge numbers for a strongly non-degenerate function $f$ as in \cref{defn:non-degenerate} when the pole divisor is reduced.

\subsection{Spectrum polynomials}
\begin{defn}
    For a complex of $\bb{Q}$-filtered vector spaces $(\rr{V}^\bullet,F^\bullet)$, we define its Spectrum polynomial as
    $$
        \mathrm{Sp}_{(\rr{V}^\bullet,F^\bullet)}(t):=\sum_{k\in \bb{Z}}(-1)^k\sum_{\alpha \in \mathbb{Q}} \dim \mathrm{gr}_F^{\alpha} \rr{V}^k \cdot  t^{\alpha} \in \bb{Z}[t^{1/\infty}].
    $$
    When $\rr{V}^\bullet$ is $\rr{H}^*_{\rr{dR},c}(Y)$ for a variety $Y$, $\rr{H}^*_{\rr{dR},c}(U,f)$ or $\rr{R}\Gamma_c(f\inv(0), \psi_{ f}(\bb{Q}_{U}))$  for a regular function $f$ on $U$, we denote the corresponding Spectrum polynomials by $\mathrm{Sp}^c_Y(t)$, $\mathrm{Sp}^c_f(t)$ and $\mathrm{Sp}^c_{\psi_{f}}(t)$ respectively.
\end{defn}

\begin{prop}\label{proposition:unipotent-computation}
Assume that $(X,D,f)$ is a pair with a non-degenerate function with $D$ reduced and $f$ strongly non-degenerate.  Then we have
\begin{align*}
  \mathrm{Sp}_{f}^{c}(t)
  &= \mathrm{Sp}^c_X(t) + \sum_{k=1}^r (-1)^k \left\{ (1+t+\cdots + t^{k}) \sum_{|I|=k} \mathrm{Sp}^c_{D_I}(t) \right\} \\
  &\qquad - \sum_{k=1}^r (-1)^k \left\{ (t+t^2\cdots + t^{k}) \sum_{|I|=k}\mathrm{Sp}^c_{D_I\cap Z }(t) \right\}.
\end{align*}
\end{prop}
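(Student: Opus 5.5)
The plan is to compute $\mathrm{Sp}^c_f(t)$ through Theorem~\ref{thm:KKP-numbers} (in its compactly supported form, which agrees by \cref{lemma:snc-pair-implies-pure}). Since $D$ is reduced, the monodromy at infinity is unipotent, so only $\lambda=1$ and $\alpha=0$ occur. Hence $h^p_{\rr{irr}}(U,f,i)=\dim\gr^p_{F_{\lim}}\rr{H}^i(U,f^{-1}(t))$, and $\mathrm{Sp}^c_f(t)$ becomes the Hodge-polynomial Euler characteristic of the limiting mixed Hodge structures on the relative cohomology, with $t$ recording the $F$-index.

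The first step is to exploit additivity. Because $\gr_F$ is exact on mixed Hodge structures, the spectrum polynomial is additive in distinguished triangles. The triangle of \cref{prop:distinguished-triangle}, after applying $\psi_{1/t}$, gives
\[
\mathrm{Sp}^c_f(t)=\mathrm{Sp}^c_U(t)-\mathrm{Sp}^c_{\psi_{1/g}}(t),
\]
where the last term is the spectrum of $\rr{R}\Gamma(g^{-1}(\infty),\psi_{1/g}\bb{Q})$ with its limit MHS on the general fiber. Here I work with compact supports and use the compactification $\fk{X}\to\bb{P}^1$ from the proof of \cref{lemma:snc-pair-implies-pure}. In that compactification the fiber at infinity is $D$ (reduced, snc), and $\fk{X}$ is the blow-up of $X$ along $Z\cap D$, where $Z=Z(s_0)$; I treat its singularities via the base locus $\bb{A}^1\times B$. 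By inclusion–exclusion over strata, $\mathrm{Sp}^c_U=\mathrm{Sp}^c_X+\sum_{k\ge1}(-1)^k\sum_{|I|=k}\mathrm{Sp}^c_{D_I}$.

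The second step is the nearby-cycle computation for a reduced snc central fiber, via the Steenbrink spectral sequence (or Saito's formula for $\psi$ of normal crossing functions). The unipotent nearby cycles restricted to the open stratum $D_I^\circ$ ($|I|=k$) have cohomology sheaves with graded pieces $\bb{Q}(-j)$, for $0\le j\le k-1$, in appropriate degrees. Their contribution to the spectrum is therefore $\pm(1+t+\cdots+t^{k-1})\,\mathrm{Sp}^c_{D_I^\circ}$. Care is needed at the base locus: along $Z\cap D$ the total space of the pencil is not snc in the same way, and those strata behave like the general fiber of a smooth family. I would handle this by stratifying $D_I=(D_I\setminus Z)\sqcup(D_I\cap Z)$ and computing $\psi$ separately on each piece. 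The pieces $D_I\cap Z$ acquire one fewer Tate twist, because $Z$ is the general fiber direction and is transversal.

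Finally, I would convert from open strata $D_I^\circ$ back to closed $D_I$ by Möbius inversion. Combining the shifts $(1+\cdots+t^{k-1})$ from nearby cycles with the $-1$ from the $U$ term should telescope into the coefficients $(1+t+\cdots+t^k)$ on $\mathrm{Sp}^c_{D_I}$ and $(t+\cdots+t^k)$ on $\mathrm{Sp}^c_{D_I\cap Z}$. This is a purely combinatorial identity, checked via $\sum_{J\supseteq I}(-1)^{|J|-|I|}$ sums.

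The strong non-degeneracy (ampleness) hypothesis is used only to guarantee affineness and the concentration of \cref{cor:vanishing}, so that the Euler characteristic determines the individual Hodge numbers. I expect the main obstacle to be the precise local nearby-cycle computation near the base locus $Z\cap D$, where the pencil total space is singular or blown up. Getting the right Tate twists there is exactly what produces the second correction sum, so I would first verify it on \cref{example} ($\bb{P}^2$ with three lines).
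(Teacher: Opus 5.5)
Your first reduction is where the argument breaks: the identity $\mathrm{Sp}^c_f=\mathrm{Sp}^c_U-\mathrm{Sp}^c_{\psi_{1/g}}$ is false, for two independent reasons.

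First, the nearby fibre in \cref{prop:distinguished-triangle} is the \emph{open} fibre $f^{-1}(t)$. But $\rr{R}\Gamma(g^{-1}(\infty),\psi_{1/g}\mathbb{Q}_{\mathfrak{X}})$ computes the closed fibre $g^{-1}(t)=f^{-1}(t)\sqcup B$ of the pencil. The paper removes $B$ using \cref{lem:M-S-S}: $\phi_{1/g}$ vanishes along $\infty\times B$ (Maxim--Saito--Sch\"urmann). This gives the triangle $j_!\psi_{1/f,1}\mathbb{Q}^{\rr{H}}_U\to\psi_{1/g,1}\mathbb{Q}^{\rr{H}}_{\mathfrak{X}}\to i_*\mathbb{Q}^{\rr{H}}_B$. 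So one only has to compute $\psi$ on the strata $D_I^\circ\setminus Z$, where $\bar f$ is \'etale locally $\prod_{i\in I}x_i$. This also avoids the singularities of $\mathfrak{X}$ at $\infty\times B_J$ for $|J|\ge 2$ (locally $x_1\cdots x_k=uy$), where the reduced-snc nearby-cycle computation you invoke does not apply.

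Second, with compact supports the nearby term carries a Tate twist: the correct relation is $\mathrm{Sp}^c_f=\mathrm{Sp}^c_U-t\,\mathrm{Sp}^c_{\psi_{1/f,1}}$. \cref{prop:distinguished-triangle} yields $\mathrm{Sp}_f=\mathrm{Sp}_U-\mathrm{Sp}_{\psi}$ for \emph{ordinary} cohomology. Dualizing ($U$ has dimension $n$, the fibre $n-1$) produces the factor $t$; you paired $\mathrm{Sp}^c_U$ with an untwisted nearby term.

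Both defects show up on examples:
\begin{itemize}
\item For $U=\mathbb{G}_m$, $f=x+x^{-1}$ (where $B=\emptyset$), your formula gives $(t-1)-2=t-3$ instead of $-1-t$.
\item In \cref{example} it gives $(1-t)^2-0$, because the limit spectrum of the compact cubic $g^{-1}(t)$ is $0$. In fact $\mathrm{Sp}^c_{\psi_{1/f,1}}=-9$, and the answer is $(1-t)^2+9t=1+7t+t^2$.
\end{itemize}

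The factor $t$ is exactly what makes your hoped-for telescoping work, since $1+t(1+\cdots+t^{k-1})=1+t+\cdots+t^k$ and $t(1+\cdots+t^{k-1})=t+\cdots+t^k$. Without it the coefficients cannot match.

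Your local input is also misplaced. On an open stratum $D_I^\circ$ with $|I|=k$ one has $\mathcal{H}^j\psi_{1/f,1}\cong\mathbb{Q}(-j)^{\oplus\binom{k-1}{j}}$. So the contribution is $(1-t)^{k-1}\bigl(\mathrm{Sp}^c_{D_I^\circ}-\mathrm{Sp}^c_{D_I^\circ\cap Z}\bigr)$, not $\pm(1+\cdots+t^{k-1})\,\mathrm{Sp}^c_{D_I^\circ}$. The factor $(-1)^{k-1}(1+\cdots+t^{k-1})$ appears only on the closed strata $D_J$ after inclusion--exclusion, via $\sum_{\emptyset\neq I\subseteq J}(-1)^{|J|-|I|}(1-t)^{|I|-1}=(-1)^{|J|-1}(1+\cdots+t^{|J|-1})$. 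M\"obius-inverting your open-stratum factor would give wrong coefficients.

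Finally, the $Z$-terms do not come from a shifted twist on $D_I\cap Z$. Along $B$ the nearby cycles are just $\mathbb{Q}_B$ and are discarded entirely. The terms $(t+\cdots+t^k)\,\mathrm{Sp}^c_{D_I\cap Z}$ come from deleting $D_I^\circ\cap Z$ from each stratum and then applying the global factor $t$.
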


\begin{rmk}
    When $f$ is a strongly non-degenerate function, the only possibly non-vanishing cohomology is the middle one by \cref{cor:vanishing}. Hence, the irregular Hodge numbers of $\rr{H}^{\dim X}_{\rr{dR},c}(X-|D|,f)$ are given by the coefficients of $t^\alpha$ in  $(-1)^{\dim X}\mathrm{Sp}^c_{f}(t)$.
\end{rmk}

\begin{proof}[Proof of Proposition~\ref{proposition:unipotent-computation}]
Taking similar notation from the proof of \cref{lemma:snc-pair-implies-pure}. Let $D=\sum_{i=1}^r e_iD_i$ be a simple normal crossing divisor in $X$ and $f$ a non-degenerate function on $U:=X- D$ defined by $s_0$ and  $s_\infty$ in $\rr{H}^0(X,\cc{O}_X(D))$ as in \cref{defn:non-degenerate}. We denote by $B=Z(s_0)\cap D$ the base locus. Let
        $$\mathfrak{X} =\{(t,x)\in \bb{P}^1\times X\mid ts_{\infty}-s_0=0\}$$
and $g\colon \fk{X} \to \bb{P}^1$ be the projection to the first factor. 

Notice that $U$ is embedded in $\fk{X}$ via the graph embedding of $f$. We define $\bar{U}:=\fk{X}-\bb{P}^1\times B$, which contains $U$.  
We denote by the restrictions of $g$ to $\bar{U}$  by $\bar{f}$, then we have the following commutative diagram
\begin{equation}
\begin{tikzcd}
    \fk{X}  \ar[d,"g "]  & \bar{U}  =\fk{X} - (\bb{P}^1\times B)\ar[d,"\bar{f}"] \ar[l,hook'] & U\ar[d,"f "]\ar[l,hook']\\
    \bb{P}^1 &\bb{P}^1\ar[l,equal] &\bb{A}^1\ar[l,hook'].
\end{tikzcd}
\end{equation}

Let $j\colon D \setminus B \to D$ be the
open immersion. Notice that $\bar{f}\inv (\bb{A}^1)=U$. So $\phi_{1/f}(\mathbb{Q}_{U}^\rr{H})=\phi_{1/\bar{f}}(\mathbb{Q}_{U}^\rr{H})$.

\begin{lem}\label{lem:M-S-S}
    $\phi_{1/g}\bb{Q}^{\rr{H}}|_{\infty\times B  }=0$.
\end{lem}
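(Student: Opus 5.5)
To prove that $\phi_{1/g}\bb{Q}^{\rr{H}}|_{\infty\times B}=0$, I will compute everything in local analytic coordinates near a point $(\infty,x)$ with $x\in B$. It suffices to show that the underlying perverse sheaf $\phi_{1/g}\bb{Q}[\dim\fk{X}]$ has zero stalks along $\infty\times B$. The functor $\mathrm{rat}$ is faithful and exact, and $\phi$ commutes with $\mathrm{rat}$, so vanishing of the perverse sheaf near $\infty\times B$ gives vanishing of the mixed Hodge module restricted there.

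First I will check that $\fk{X}$ is smooth near $\infty\times B$ and that $1/g$ is a smooth function there. Put $u=1/t$. Near $t=\infty$ the equation of $\fk{X}$ becomes $s_\infty(x)-u\,s_0(x)=0$.

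Take $x\in B$ lying on exactly the components $D_1,\dots,D_k$. Choose local coordinates $y_1,\dots,y_n$ on $X$ with $\sigma_i=y_i$, so that $s_\infty=y_1^{e_1}\cdots y_k^{e_k}$ up to a unit. Non-degeneracy says $Z(s_0)$ is smooth and transversal to $D$, so $s_0$ can be taken as a coordinate $y_{k+1}$, independent of $y_1,\dots,y_k$. The germ of $\fk{X}$ at $(0,x)$ is then
\[
\{\,y_1^{e_1}\cdots y_k^{e_k}=u\,y_{k+1}\,\}\subset \bb{C}_u\times\bb{C}^n .
\]
The partial derivative of this equation with respect to $y_{k+1}$ is $-u$, which vanishes at $u=0$. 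So smoothness has to come from a different variable, and I need to be careful here.

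The useful observation is that near $x\in B$ we have $y_{k+1}=s_0$, which vanishes on $B$, and we can solve for $u$ only away from $y_{k+1}=0$. To handle this, I will pick new coordinates on the germ: $(u,y_1,\dots,y_k,y_{k+2},\dots,y_n)$ together with $y_{k+1}$, subject to the single relation above. The total space is the hypersurface $h=y_1^{e_1}\cdots y_k^{e_k}-u y_{k+1}=0$, and the function whose vanishing cycles we need is $u$ restricted to it.

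The strategy is then to show that $u|_{\fk{X}}$ has no vanishing cycles. This holds if $(\fk{X},u^{-1}(0))$ is locally topologically a product near points of $\infty\times B$, namely if the fibration $u$ is locally trivial over a small disk around $0$ in a neighbourhood of $(0,x)$. I will prove this directly with a $\bb{C}^*$-action. For $u=c$, the local fibre is $\{y_{k+1}=c^{-1}y_1^{e_1}\cdots y_k^{e_k}\}$ when $c\neq 0$, which is a graph over $(y_1,\dots,y_k,y_{k+2},\dots)$ and hence contractible. For $u=0$ the fibre is $\{y^{e}=0\}\times\bb{C}_{y_{k+1}}$, which is also contractible, being a cone.

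Concretely, I will compute the stalk of $\psi_u\bb{Q}$ at $(0,x)$ as the cohomology of the Milnor fibre $F=\fk{X}\cap B_\epsilon\cap\{u=\eta\}$. On the small ball this is the graph $y_{k+1}=\eta^{-1}y^{e}$, intersected with the ball. For $|y|$ small and $\eta\ll\epsilon$, this is diffeomorphic to the set $\{|y^{e}|\le \eta\epsilon'\}$ inside a polydisc, which is a star-shaped region under radial scaling of $y_1,\dots,y_k$. Hence $F$ is contractible, $\tilde{\rr{H}}^*(F)=0$, and $\phi_u\bb{Q}$ has zero stalk at $(0,x)$.

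I expect the main obstacle to be making this Milnor fibre computation rigorous, given that $\fk{X}$ is singular. Since $\fk{X}$ is singular along $\{u=0,\ y_{k+1}=0,\ \text{at least two } y_i=0\}$ (or wherever some $e_i\ge 2$), the perverse sheaf $\bb{Q}_{\fk{X}}[\dim]$ is not smooth there. So I will work with $\phi$ of the constant sheaf on the hypersurface and the Milnor fibre of $u$ restricted to it, using a polydisc in place of the ball and verifying transversality at the boundary. If that route proves awkward, the fallback is to note that the projection $\fk{X}\to X$ is an isomorphism away from $B$ and that, over $B$, $\fk{X}\supset\bb{P}^1\times B$. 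I would then compute the nearby cycles via the blow-up description $\fk{X}=\mathrm{Bl}$ of $X$ along the base scheme $(s_0,s_\infty)$. Under this description $1/g$ is the pulled-back pencil, and the vanishing of $\phi$ along $\infty\times B$ becomes the standard fact that a pencil has no vanishing cycles along its base locus when the generic member is transversal to the base locus.
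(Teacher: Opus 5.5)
Your argument is correct, but it takes a different route from the paper. The paper passes to $\mathrm{rat}$, as you do, and then simply quotes \cite[Prop.~4.1]{MaximEtAlHirzebruchMilnor13}. That result is a general vanishing statement for the total space of a pencil of hypersurfaces along its base locus, applied here with special member $P=Z(s_\infty)$ and generic member $Z(s_0)$; the paper does no local computation.

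You instead use the SNC hypothesis directly. Transversality of $Z(s_0)$ gives, at each $x\in B$, the normal form $\{y_1^{e_1}\cdots y_k^{e_k}=u\,y_{k+1}\}$ with $u=1/g$. The Milnor fibre at $(0,x)$ is the graph $y_{k+1}=\eta^{-1}y^{e}$ over a region in the remaining coordinates $y'$. That region is stable under $y'\mapsto\lambda y'$ for $\lambda\in[0,1]$, hence contractible. So $(\psi_u\mathbb{Q})_{(0,x)}\cong\mathbb{Q}$ and $(\phi_u\mathbb{Q})_{(0,x)}=0$.

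The citation is shorter and covers arbitrary (Whitney-stratified) special members. Your computation is self-contained and needs no stratification theory. It also treats non-reduced $D$ uniformly and shows that the stalks of $\psi_{1/g}\mathbb{Q}$ along $\infty\times B$ are just $\mathbb{Q}$. Your blow-up ``fallback'' is essentially the statement the paper cites.

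\textbf{Corrections to the write-up.}
\begin{enumerate}
\item Your initial goal that $\mathfrak{X}$ be smooth near $\infty\times B$ is false, as you then notice. $\mathfrak{X}$ is singular at $(\infty,x)$ whenever $x\in B$ lies on two components of $D$ or on a component with $e_i\ge 2$.
\item The sufficient condition you announce, local topological triviality of $u$ near $(0,x)$, also fails. For $y_1y_2=u\,y_3$ the special fibre is two crossing hyperplanes while the nearby fibres are smooth, so there is no local product structure.
\end{enumerate}
Neither claim is used by your Milnor fibre computation, so drop them.

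\textbf{The rigour concern.} The issue you anticipate is harmless. With the round ball, the Milnor fibre is
\[
\{\,y' : |y'|^2+|\eta|^{-2}|y^{e}|^2<\epsilon^2-|\eta|^2\,\},
\]
which is already star-shaped under $y'\mapsto\lambda y'$. So L\^e's Milnor fibration theorem for functions on singular spaces applies with no boundary-transversality check.

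Alternatively, take the cofinal polydisc neighbourhoods $W$ of $(0,x)$. The pullback of $W\cap\{u\neq 0\}$ to the universal cover of the punctured $u$-disc is contractible by the same scaling. This computes the stalk of $\psi_u\mathbb{Q}$ directly from the definition.
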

\begin{proof}
    Since the functor $\rr{rat}$ sending mixed Hodge modules to their underlying perverse sheaves is faithful and exact, it suffices to show that $\phi_{1/g}\bb{Q}|_{\infty\times B}=0$. This can be deduced from \cite[Prop. 4.1]{MaximEtAlHirzebruchMilnor13} by setting $\cc{Y}=\fk{X}$, $X=P$, $X'$ is smooth hypersurface in the linear system $|P|$, and $X''=X\cap X'=B$ is the base locus.
\end{proof}
By \cref{lem:M-S-S}, the natural map
\[
    j_{!}\phi_{1/f}(\mathbb{Q}_{U}^\rr{H}) = j_{!}j^{\ast}\phi_{1/g}(\mathbb{Q}^{\rr{H}}_{\fk{X}})
    \to \phi_{1/g}(\mathbb{Q}^{\rr{H}}_{\fk{X}})
\]
is an isomorphism.  Therefore, the following diagram
\[
    \begin{tikzcd}
        j_{!}\mathbb{Q}^{\rr{H}}_{D\setminus B} \ar[r] \ar[d] & j_{!}\psi_{1/f}(\mathbb{Q}^{\rr{H}}_{U}) \ar[d] \\
        \mathbb{Q}^{\rr{H}}_{D} \ar[r] & \psi_{1/g}(\mathbb{Q}^{\rr{H}}_{\fk{X}})
    \end{tikzcd}
\]
is a homotopy Cartesian diagram.  Since the cone of the left vertical arrow is $i_{\ast}\mathbb{Q}^{\rr{H}}_{B}$, where $i\colon B \to D$ is the inclusion morphism, we get the following exact triangle of mixed Hodge modules:
\[
    j_{!} \psi_{1/f,1}(\mathbb{Q}^{\rr{H}}_U) \to \psi_{1/g,1}(\mathbb{Q}^{\rr{H}}_{\fk{X}}) \to i_{\ast}\mathbb{Q}^{\rr{H}}_{B}.
\]
Since the Spectrum polynomial is an additive invariant, we have
\begin{equation*}\label{eq:additive-spectrum}
    \mathrm{Sp}_{\psi_{1/f,1}}^c(t) = \mathrm{Sp}^c_{\psi_{1/g,1}}(t) - \mathrm{Sp}^c_B(t).
\end{equation*}

For each non-empty subset $I\subset\{1,\dots,r\}$, set
$ D_I:=\bigcap_{i\in I}D_i,$
and, by convention, $D_{\emptyset}:=X$.  Put $Z:=Z(s_0)$ and, for $I\neq\emptyset$, denote $Z_I:=D_I\cap Z$. For $I \neq \emptyset$, the morphism
    $\overline{f}\colon \mathfrak{X} \to \mathbb{P}^1$ 
is étale locally of the form 
    $$(x_1,\ldots,x_n) \mapsto \prod_{i\in I}x^{e_{i}}_{i}$$
around any point $x \in D^{\circ}_I\setminus D_I^{\circ} \cap B$.
So by \cite[Expos\'e I, Th\'eorem\`e~3.3]{sga7}, we have
\[
\mathcal{H}^i\psi_{f^{-1},1}(\mathbb{Q}_{\mathfrak{X}}^{\rr{H}})|_{D_I^{\circ} \setminus B} \simeq \mathbb{Q}_{D^{\circ}_I\setminus B}^{\rr{H},\oplus \binom{|I|}{i}}(-i).
\]
Hence, for any nonempty $I \subset \{1,\ldots,r\}$, the Spectrum polynomial of
$\psi_{1/f,1}(\mathbb{Q}^{\rr{H}}_{U})|_{D_I^{\circ}\setminus B}$ equals
\[
(1-t)^{|I|-1}\cdot (\mathrm{Sp}^{c}_{D_I^{\circ}}(t) - \mathrm{Sp}_{D^{\circ}_I \cap Z}^{c}(t)).
\]
By additivity, we find that
\[
\mathrm{Sp}_{\psi_{1/f,1}}^{c}(t) = \sum_{\substack{I\subset\{1,\ldots,r\}\\ I\neq \emptyset}}
(1-t)^{|I|-1} \cdot (\mathrm{Sp}^{c}_{D_I^{\circ}}(t) - \mathrm{Sp}_{D^{\circ}_I \cap Z}^{c}(t)).
\]
By inclusion-exclusion, we have
\begin{align*}
  \mathrm{Sp}^c_{D_I^{\circ}}(t) &= \sum_{J\supset I} (-1)^{|J|-|I|} \mathrm{Sp}^c_{D_J}(t) \\
  \mathrm{Sp}^c_{D_I^{\circ} \cap Z}(t) &= \sum_{J\supset I} (-1)^{|J|-|I|} \mathrm{Sp}^c_{D_J\cap Z}(t).
\end{align*}
Therefore,
\begin{align*}
  \mathrm{Sp}_{\psi_{1/f,1}}^{c}(t)
  &= \sum_{\substack{I\subset\{1,\ldots,r\}\\ I\neq \emptyset}} \sum_{J\supset I} (-1)^{|J|-|I|}(1-t)^{|I|-1} \left( \mathrm{Sp}^c_{D_J}(t) - \mathrm{Sp}^c_{D_J\cap Z}(t) \right) \\
  &= \sum_{\substack{J\subset\{1,\ldots,r\}\\ J\neq \emptyset}} (-1)^{|J|-1} \frac{1}{t-1}
  \left( \mathrm{Sp}^c_{D_J}(t) - \mathrm{Sp}^c_{D_J\cap Z}(t) \right)\sum_{\substack{I \subset J \\ I\neq \emptyset}}
  (t-1)^{|I|} \\
  &= \sum_{k=1}^r (-1)^{k-1} (1+t+\cdots + t^{k-1}) \sum_{|J|=k} \left( \mathrm{Sp}^c_{D_J}(t) - \mathrm{Sp}^c_{D_J\cap Z}(t) \right).
\end{align*}
Finally, by Theorem~\ref{thm:KKP-numbers} and
Lemma~\ref{lemma:snc-pair-implies-pure}, we have
\begin{align*}
  \mathrm{Sp}_{f,1}(t)
  &= \mathrm{Sp}_U^c(t) - t\mathrm{Sp}_{\psi_{1/f,1}}^{c}(t) \\
  &= \mathrm{Sp}^c_X(t) - \sum_{\substack{J\subset\{1,\ldots,r\}\\ J\neq \emptyset}} (-1)^{|J|-1} \mathrm{Sp}^c_{D_{J}}(t) - t \mathrm{Sp}_{\psi_{1/f,1}}^c(t)\\
  &= \mathrm{Sp}^c_X(t) - \sum_{k=1}^r(-1)^{k-1}(1 + t + \cdots + t^k) \sum_{|J|=k} \mathrm{Sp}^c_{D_J}(t) \\
  &\qquad+ \sum_{k=1}^r(-1)^{k-1}t(1 + \cdots + t^{k-1})\mathrm{Sp}^c_{D_J\cap Z}(t).
\end{align*}
This completes the proof.
\end{proof}

\subsection{Examples}
\begin{exe}\label{example:calculation}
    Back to the setting of \cref{example}, we have $X=\bb{P}^2$ and $D=L_0\cup L_1\cup L_2$, where the three lines $L_0, L_1$ and $L_2$ are defined by $x_0,x_1$, and $x_2$ respectively. In particular, the interior $U:=X -|D|$ is the torus $\bb{G}_{m}^2$. We take the (strongly) non-degenerate functions $f$ on $(X,D)$ of the form
        $$\frac{F}{x_0x_1x_2}\colon U\to \bb{A}^1$$
    where $F$ are cubic homogeneous polynomials in $x_0,x_1$, and $x_2$, such that $Z(F)$ is transverse to $D$.
    \begin{itemize}
        \item The Spectrum polynomial of $X=\bb{P}^2$ is
            $$\mathrm{Sp}^c_X(t)=1+t+t^2.$$
        \item For each $i=0,1,2$, the divisor $D_i=L_i\simeq \bb{P}^1$ has Spectrum polynomial
            $$\mathrm{Sp}^c_{D_i}(t)=1+t.$$
        \item For each $0\leq i<j\leq 2$, the intersection $D_i\cap D_j$ is a point, whose Spectrum polynomial is
            $$\mathrm{Sp}^c_{D_i\cap D_j}(t)=1.$$
        \item Finally, the intersection $Z\cap D_i$ consists of three points for each $i=0,1,2$, hence
            $$\mathrm{Sp}^c_{D_i\cap Z}(t)=3.$$
    \end{itemize}
    Applying \cref{proposition:unipotent-computation}, we have
    \begin{align*}
      \mathrm{Sp}_{f}^{c}(t) = 1 + 7t +  t^2 .
    \end{align*}
    Hence, the irregular Hodge numbers of $\rr{H}^2_{\rr{dR},c}(U,f)\simeq \rr{H}^2_{\rr{dR}}(U,f)$ are given by $h^{0}_{\rr{irr}}(U,f,2)=1$, $h^{1}_{\rr{irr}}(U,f,2)=7$, and $h^{2}_{\rr{irr}}(U,f,2)=1$.
\end{exe}

\begin{exe}\label{example:calculation2}
    Back to the setting of \cref{example2}, we have $X=\bb{P}^2$ and $D=Z(x_0^3+x_1^3+x_2^3+3x_0x_1x_2)$. We take the (strongly) non-degenerate functions $f$ on $(X,D)$ of the form
        $$\frac{x_0x_1x_2}{x_0^3+x_1^3+x_2^3+3x_0x_1x_2}\colon U\to \bb{A}^1.$$
    \begin{itemize}
        \item The Spectrum polynomial of $X=\bb{P}^2$ is
            $$\mathrm{Sp}^c_X(t)=1+t+t^2.$$
        \item The divisor $D$ is an elliptic curve, having Spectrum polynomial
            $$\mathrm{Sp}^c_{D}(t)=1-(1+t)+t=0.$$ 
        \item Finally, the intersection $Z\cap D$ consists of $9$ points, hence
            $$\mathrm{Sp}^c_{D\cap Z}(t)=9.$$
    \end{itemize}
    Applying \cref{proposition:unipotent-computation} to $f+c$ for a generic $c\in \bb{C}$, we have
    \begin{align*}
      \mathrm{Sp}_{f}^{c}(t) = 1 + 10t +  t^2 .
    \end{align*}
    Hence, the irregular Hodge numbers of $\rr{H}^2_{\rr{dR},c}(U,f)\simeq \rr{H}^2_{\rr{dR}}(U,f)$ are given by $h^{0}_{\rr{irr}}(U,f,2)=1$, $h^{1}_{\rr{irr}}(U,f,2)=10$, and $h^{2}_{\rr{irr}}(U,f,2)=1$.
\end{exe}

\section*{Acknowledgement}
The authors thank Javier Fres\'an, Takuro Mochizuki, Claude Sabbah and  Christian Sevenheck and  for helpful discussions and comments.

{\footnotesize 
\bibliographystyle{alpha} 
\bibliography{bibtex} }

\end{document}